\documentclass[11pt, a4paper, reqno]{amsart}

\usepackage{verbatim, ifpdf}
  \usepackage{graphicx, color}

 \usepackage{amssymb, amsmath}
 \usepackage[a4paper,margin=30mm]{geometry} 

 \usepackage{amsthm}
 \usepackage{listings}
 \usepackage{bbm}
 \usepackage{mathtools}

 \usepackage{tikz-cd}

 \usepackage{soul}

 \usepackage{stmaryrd}

 \usepackage{enumitem}
 \usepackage{framed}

 \usepackage{mathrsfs}
 \usepackage{todonotes}

 \usetikzlibrary{decorations.markings}

\definecolor{colorForCitations}{HTML}{00643A}

  \IfFileExists{hyperref.sty}{%
    \ifpdf
      \usepackage[pdftex]{hyperref}
    \else
      \usepackage[dvips]{hyperref}
    \fi
    \definecolor{maroon}{RGB}{128,0,0}
    \hypersetup{pdfborder=0 0 0,colorlinks=true,plainpages=false,
     pageanchor=true,linkcolor=black,urlcolor=maroon,
     citecolor=colorForCitations}%
  }{%
    \providecommand{\href}[1]{\texttt{##1}}
    \providecommand{\href}[2]{##2\footnote{~See \texttt{##1}.}}%
  }

\newtheorem{theorem}{Theorem}[section]
\newtheorem{proposition}[theorem]{Proposition}
\newtheorem{lemma}[theorem]{Lemma}

\newtheorem{conjecture}[theorem]{Conjecture}
\newtheorem{corollary}[theorem]{Corollary}

\theoremstyle{remark}
\newtheorem{remark}[theorem]{Remark}
\theoremstyle{definition}

\newtheorem{definition}[theorem]{Definition}

\newcommand{\len}{\ell}

\newcommand{\ZZ}{\mathbb{Z}}
\newcommand{\QQ}{\mathbb{Q}}
\newcommand{\RR}{\mathbb{R}}
\newcommand{\CC}{\mathbb{C}}

\newcommand{\EE}{{\mathbb{E}}}
\newcommand{\PP}{{\mathbb{P}}}

\newcommand{\M}{\overline{\mathcal{M}}}
\newcommand{\D}{\widehat{\mathcal{D}}}
\newcommand{\lcm}{\mathrm{lcm}}
\newcommand{\denom}{\mathrm{denom}}
\newcommand{\Aut}{\mathrm{Aut}}
\newcommand{\id}{{\mathrm{id}}}

\newcommand{\Sp}{{\mathrm{Sp}}}
\newcommand{\FF}{{\mathbb{F}}}
\newcommand{\GL}{{\mathrm{GL}}}
\newcommand{\GG}{{\mathbb{G}}}
\newcommand{\Pic}{{\mathbf{Pic}}}

\begin{document}

\title[Denominators of Hodge integrals]{Denominators of Hodge integrals and Minkowski numbers}

\author[D. Holmes]{Daniel Holmes}
\address{Institute of Science and Technology Austria (ISTA)}
\email{daniel.holmes@ist.ac.at}
\urladdr{\href{https://www.daniel-holmes.at}{https://www.daniel-holmes.at}}

\subjclass[2020]{Primary 14H10, 
14C17; 
Secondary
14H37, 
11B68, 
11B83
}

\keywords{Hodge integrals, stable curves, Minkowski numbers, Bernoulli polynomials, N\o rlund polynomials, Gromov--Witten theory}

\begin{abstract}
We prove that the least common denominator of all genus $g$ Hodge integrals is the Minkowski number $T_{2g}$ or, equivalently, the Bhargava factorial $(2g+1)!_{\PP}$ associated to the set of primes $\PP\subset \mathbb{N}$.
As a corollary, we prove a conjecture of Liu--Xu on the multiplicity of $3$ in the least common denominators of certain descendant integrals, and generalise their result on a conjecture of Itzykson--Zuber to Hodge integrals with $\lambda$ classes.
Namely, the order of the automorphism group of any pointed stable curve of genus $g'\le g$ divides the least common denominator of all genus $g$ Hodge integrals.

We also comment on the use of generative AI via \textsf{IMProofBench}, which played a fruitful part in the development of this (entirely human-written) paper.
\end{abstract}

\maketitle


\section{Introduction}

Let $\M_{g,n}$ be the Deligne--Mumford moduli stack of stable curves of genus $g$ with $n$ marked points \cite{MR262240, MR702953}.
It carries several important vector bundles: the Hodge bundle $\EE$ of rank $g$, and for each $i\in\{1,\dots,n\}$ the cotangent line bundle $\mathcal{L}_i$.
The fibres over a stable curve $[C,p_1,\dots,p_n]$ are $H^0(C,\omega_C)$ and $T^*_{p_i}C$, respectively, where $\omega_C$ is the dualising sheaf of $C$
(see \cite[\S XVII.4]{MR2807457} for more details).

A \emph{Hodge integral} is a rational number of the form
\[
\int_{\M_{g,n}}\psi_1^{a_1}\dots\psi_n^{a_n}\lambda_1^{l_1}\dots\lambda_g^{l_g}
\]
where $\psi_i=c_1(\mathcal{L}_i)$, $\lambda_j=c_j(\EE)$, and the exponents are non-negative integers.
For $g>0$, Hodge integrals generally fail to be integers because $\M_{g,n}$ is not represented by a scheme, due to the presence of non-trivial automorphisms.

Motivation for studying Hodge integrals arises from intersection theory on the moduli of curves \cite{MR717614} and its tautological rings (see \cite{MR3821159} and references therein).
Via the Torelli map, this extends to the study of $\mathcal{A}_g$, the moduli space of principally polarised abelian varieties.
In Gromov--Witten theory, Hodge integrals appear as natural ingredients in the equivariant localisation formula \cite{MR1666787, MR1728879, MR1954265}.
They are related to Hurwitz numbers via the ELSV formula \cite{MR1864018} and to the representation theory of symmetric groups via the Gopakumar--Mari\~no--Vafa formula \cite{MR1950947, MR2057777, MR2199062}.

Given a rational number $a/b\in\QQ$ with coprime integers $a,b$ and $b$ positive, we write $\denom(a/b):=b$ for its denominator.
Consider the following integer, which is the main character of this paper.

\[
\D_g:=\lcm\left\{\denom \left(\int_{\M_{g,n}}\psi_1^{a_1}\dots\psi_n^{a_n}\lambda_1^{l_1}\dots\lambda_g^{l_g}\right) \,:\,\, \begin{matrix}
    n\in\ZZ_{\ge 0}\,,\,2g-2+n>0,\\
    a_1,\dots,a_n,l_1,\dots,l_g\in\ZZ_{\ge 0}
\end{matrix}\right\}.
\]

This is the least common denominator of all genus $g$ Hodge integrals, where the condition $2g-2+n>0$ ensures that $\M_{g,n}$ is non-empty.
This generalises the following integer studied by Itzykson--Zuber \cite{MR1180858} and Liu--Xu \cite{MR2595544} for $g\ge 2$.

\[
    \mathcal{D}_g := \lcm\left\{\denom \left(\int_{\M_{g,n}}\psi_1^{a_1}\dots\psi_n^{a_n}\right) : n\ge 1\,,\, a_i\ge 2\right\}.
\]

A key property of $\mathcal{D}_g$ is its relation to automorphism groups of Riemann surfaces, which was conjectured in \cite[end~of~\S5]{MR1180858} and proved in {\cite[Corollary~4.9]{MR2595544}}.

\begin{theorem}[Liu--Xu]\label{thm : Liu Xu auts}
    For any $1< g' \le g$, the order of the automorphism group of any compact Riemann surface of genus $g'$ divides $\mathcal{D}_g$.
\end{theorem}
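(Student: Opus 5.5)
The plan is to compare two $p$-adic quantities, one prime at a time. Fix a prime $p$ and $1<g'\le g$, and let $G=\Aut(C)$ for a compact Riemann surface $C$ of genus $g'$. Since $g'\ge 2$, $G$ is finite and acts faithfully on $H^1(C,\ZZ)\cong\ZZ^{2g'}$; the faithfulness on the lattice is classical, via the Lefschetz fixed point argument. So $G$ embeds in $\GL_{2g'}(\ZZ)$. Minkowski's theorem bounds the $p$-part of any finite subgroup of $\GL_{N}(\QQ)$, which gives
\[
v_p(|G|)\;\le\; \mu_p(2g'):=\sum_{k\ge 0}\left\lfloor \frac{2g'}{p^k(p-1)}\right\rfloor .
\]
This is the $p$-adic valuation of the Minkowski number $T_{2g'}$. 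Since $\mu_p$ is monotone in its argument, it suffices to prove
\[
v_p(\mathcal{D}_g)\ \ge\ \mu_p(2g)\quad\text{for every prime } p .
\]
For odd $p$ it may be enough to compare with the sharper Riemann--Hurwitz bounds on $p$-groups acting on curves. I would use the Minkowski bound uniformly, because it is cleaner.

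The second half is to exhibit, for each prime $p$, explicit genus $g$ descendant integrals with all $a_i\ge 2$ whose denominators have $p$-valuation at least $\mu_p(2g)$. The natural first candidates are the one-point integrals $\int_{\M_{g,1}}\psi_1^{3g-2}=\frac{1}{24^g g!}$. These give $v_p\ge v_p(g!)$ for $p\ge 5$, and large powers of $2$ and $3$. This is short of $\mu_p(2g)$: for example, for $p$ odd, $\lfloor 2g/(p-1)\rfloor$ exceeds $v_p(g!)$ roughly by a factor $2$.

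So I would use the closed formulas for $n$-point functions, such as the Dijkgraaf or Liu--Xu $n$-point formula. Equivalently I would use the Virasoro/KdV recursion, applied to integrals $\int_{\M_{g,n}}\psi_1^{a_1}\cdots\psi_n^{a_n}$ with many insertions. The aim is denominators involving products of the form $\prod(2a_i+1)!!$ that accumulate a full $p$-adic valuation $\mu_p(2g)$. One then checks that no cancellation occurs in the numerator modulo $p$.

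I expect this lower bound to be the main obstacle. One needs a specific family of integrals, together with an argument that the numerator is a $p$-adic unit. My first attempt would be two-point integrals $\int_{\M_{g,2}}\psi_1^{a}\psi_2^{3g-1-a}$, for which closed formulas with explicit factorials are available. I would choose $a$ near the value where $(2a+1)!!$ picks up the extra factors of $p$. Then I would verify the unit condition by reducing the closed formula modulo $p$, for instance via Kummer/von Staudt-type congruences if Bernoulli numbers appear. The primes $2$ and $3$ would be treated separately using the powers of $24$.
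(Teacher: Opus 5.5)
\textbf{The inequality $\nu_p(\mathcal{D}_g)\ge\nu_p(T_{2g})$ is false for $p\ge 5$.} Your automorphism-side steps are fine: the faithful action on $H^1(C,\ZZ)$, Minkowski's bound and monotonicity. For $p\in\{2,3\}$ the plan also closes, because $\nu_p(24^g g!)=\nu_p(T_{2g})$ exactly for these primes, cf.\ \eqref{eqn : v2 T2n} and \eqref{eqn : v3 T2n}. The gap is the required lower bound for $p\ge5$. It is not merely the hard step; it does not hold in general.

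\begin{itemize}
\item Since $\mathcal{D}_g\mid\D_g\mid T_{2g}$ (Corollary~\ref{cor : upper bound}), demanding it at every prime is the same as demanding $\mathcal{D}_g=T_{2g}$.
\item By Lemma~\ref{lem : nu_T_eg rewrite}, $\nu_p(T_{2g})=q+\nu_p(q!)$ with $q=\lfloor 2g/(p-1)\rfloor$. Liu--Xu's Conjecture~\ref{conj : Liu Xu orders}(iii) predicts $\nu_p(\mathcal{D}_g)=q$. These differ as soon as $q\ge p$, first at $p=5$, $g=10$.
\item More decisively, your Minkowski bound never uses smoothness. The same divisibility $|\Aut(C)|\mid T_{2g'}$ holds for every stable curve (Lemma~\ref{lem : auts of semi-abelian} and the proof of Theorem~\ref{thm : cor aut}). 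So your argument would also prove the stable-curve version, which the paper records Liu--Xu observed to be false.
\item Concretely: attach five copies of $y^2=x^5-1$, at their points at infinity, to a $\mathbb{P}^1$ at the fifth roots of unity. This is a genus-$10$ stable curve with $5^6\mid|\Aut|$, saturating $\nu_5(T_{20})=6$. Conjecture (iii) predicts $\nu_5(\mathcal{D}_{10})=5$.
\item The extra factor $p^{\nu_p(q!)}$ is exactly what the paper extracts from the $\lambda$-class integrals in \eqref{eqn : FP}; see the factor $q!\,p!^q$ in Lemma~\ref{lem : lambda star divisible}. No choice of two-point $\psi$-integrals should be expected to supply it.
\end{itemize}

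\textbf{What is missing is a bound that sees smoothness.} That is precisely the Riemann--Hurwitz bound you set aside as optional; for $p\ge5$ it is necessary. Let $P\subseteq\Aut(C)$ be a $p$-group with $p\ge5$ and $g'\ge2$. Then $2g'-2=|P|\bigl(2h-2+\sum_i(1-1/m_i)\bigr)$ with every $m_i\ge p$. The bracket is at least $1-3/p$, so $|P|\le 2p(g'-1)/(p-3)$. This gives $\nu_p(|\Aut(C)|)\le\lfloor 2g'/(p-1)\rfloor$, and in fact only about $\log_p g'$. The lower bound you then need on $\nu_p(\mathcal{D}_g)$ is of this much smaller size and is compatible with Conjecture~\ref{conj : Liu Xu orders}(iii). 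That is the bound to aim for with explicit $\psi$-integrals; keep the Minkowski comparison only for $p=2,3$.
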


Moreover, Liu--Xu obtained the following conjectural formula for $\mathcal{D}_g$ \cite[Conjecture~2.5]{MR2595544} and proved part (i) of it in the same paper. For any prime $p$, let $\nu_p\colon \QQ\rightarrow \ZZ\cup\{\infty\}$ be the $p$-adic valuation defined by $\nu_p(0) = \infty$ and $\nu_p(p^ka/b)=k$ for any $k\in\ZZ$ and $a,b\in\ZZ\setminus\{0\}$ coprime to $p$.

\begin{conjecture}[Liu--Xu]\label{conj : Liu Xu orders}
    Let $g\ge 2$ be an integer. Then
    \begin{itemize}
        \item[(i)] $\nu_2(\mathcal{D}_g) = 3g + \nu_2(g!)$
        \item[(ii)] $\nu_3(\mathcal{D}_g) = g + \nu_3(g!)$
        \item[(iii)] $\nu_p(\mathcal{D}_g) = \left\lfloor\frac{2g}{p-1}\right\rfloor$ for any prime $p\ge 5$.
    \end{itemize}
\end{conjecture}

As corollaries of our main result, we will generalise Theorem~\ref{thm : Liu Xu auts} to $\D_g$ and prove parts (i) and (ii) of Conjecture~\ref{conj : Liu Xu orders}.
The other protagonist of this paper is the integer sequence \cite[\href{https://oeis.org/A053657}{A053657}]{oeis} defined as follows.

\begin{definition}\label{def : Minkowski numbers}
    Let $m\in\mathbb{Z}_{\ge 0}$.
    The \emph{Minkowski number} $T_m$ is the positive integer defined such that for any prime $p$, the exponent $\nu_p(T_m)$ of $p$ in $T_m$ is
    \[
    \nu_p(T_m) = \sum_{i=0}^\infty \left\lfloor\frac{m}{(p-1)p^i}\right\rfloor.
    \]
\end{definition}
The first few values are $(T_m)_{m\ge 0} = (1, 2, 24, 48, 5760, 11520, 2903040,\dots)$.
Minkowski numbers have many beautiful connections with other areas of mathematics:
\begin{enumerate}[label=(M\arabic*),ref=(M\arabic*)]
    \item\label{item : minkowski prop 1} For any finite subgroup $G\subset \GL_m(\QQ)$, the order $|G|$ divides $T_m$ (see \cite{MR1580123, Schur} and cf. \cite[\S1]{MR2336645}).
    \item\label{item : minkowski prop 2}
    For any set $S\subset \ZZ$, Bhargava defines a generalised factorial function $k\mapsto k!_S$ on $\ZZ_{\ge 0}$, which recovers the ordinary factorial for $S=\ZZ$ \cite{MR1792411}.
    When $S=\PP$ is the set of primes, the Minkowski numbers are recovered via
    $T_m=(m+1)!_{\PP}$ (see \cite[Example~21]{MR1792411} and cf. \cite[(11)]{MR3493376}).
    \item\label{item : minkowski prop 3} $T_m$ is the least common denominator of the coefficients of $P_{m}(x)\in\QQ[x]$ in the expansion
    \[
    \left(\frac{t}{e^t-1}\right)^x = \sum_{m=0}^\infty P_m(x)t^m.
    \]
    Minkowski numbers are therefore related to Bernoulli polynomials, N\o rlund polynomials, and Stirling numbers (see \cite{MR883535, MR1691859} and cf. \cite[Proposition~2.1]{MR2265803}).
    \item\label{item : minkowski prop 4} $\{T_m\}_{m\ge 0}$ are the denominators of Laurent polynomials forming a regular basis for the Hopf algebroid of stable cooperations for complex $K$-theory \cite{MR730711, MR2434169}.
\end{enumerate}

Our main result is that we may add denominators of Hodge integrals to this list.

\begin{theorem}\label{thm : main thm}
    Let $g$ be any non-negative integer. Then the least common denominator of all genus $g$ Hodge integrals is the Minkowski number $T_{2g}$. That is, $\D_g=T_{2g}$.
\end{theorem}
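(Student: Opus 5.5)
The proof has two halves: the upper bound $\D_g \mid T_{2g}$ and the lower bound $T_{2g}\mid \D_g$.

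\textbf{Upper bound.} The plan is to reduce all Hodge integrals to pure $\psi$-integrals and control denominators along the way. By Mumford's Grothendieck--Riemann--Roch formula, $\mathrm{ch}_{2k-1}(\EE)$ is a combination of $\kappa_{2k-1}$, of $\psi_i^{2k-1}$, and of boundary pushforwards of $\psi$-monomials, with coefficient $B_{2k}/(2k)!$. Expressing $\lambda$-monomials through Chern characters introduces further factorials. A direct bound of this kind is too crude. Instead I would organise the computation through the Faber--Pandharipande generating function. The cleanest route is probably the formula
\[
c(\EE^\vee)=\exp\Big(\sum_k (-1)^k(2k-2)!\,\mathrm{ch}_{2k-1}(\EE)\Big),
\]
expressed as a Givental-type operator $\exp\big(\sum_k \frac{B_{2k}}{2k(2k-1)}(\dots)\big)$ acting on the Witten--Kontsevich descendant potential. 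In this form each genus $g$ Hodge integral becomes a sum over stable graphs. The vertex contributions are $\psi$-integrals of lower genus $h_v$ with $\sum h_v + b_1 = g$, and the edge and leg weights are coefficients of $\left(t/(e^t-1)\right)^x$-type series, i.e.\ N\o rlund polynomials evaluated at integers.

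The key arithmetic input is then property~\ref{item : minkowski prop 3}: the coefficient of $t^m$ in $(t/(e^t-1))^x$, and of its variants, has denominator dividing $T_m$. One needs the super-multiplicativity $T_aT_b\mid T_{a+b}$, which is immediate from Definition~\ref{def : Minkowski numbers}. One also needs the known bound that genus $h$ pure $\psi$-integrals have denominators dividing $T_{2h}$; this can be checked through Liu--Xu's results or through the $\lambda_g$/string-dilaton recursions, and is plausibly part of the same induction. Together these give $\D_g\mid T_{2g}$ by induction on $g$. The graph automorphism factors $1/|\Aut\Gamma|$ must be absorbed, and I would handle them by working with unordered-leg generating functions so that they cancel against multinomial coefficients.

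\textbf{Lower bound.} For each prime $p$ I would exhibit a single Hodge integral with $\nu_p$ equal to $\sum_i\lfloor 2g/((p-1)p^i)\rfloor$. The natural candidates are integrals with explicit closed forms:
\[
\int_{\M_{g,1}}\psi_1^{2g-2}\lambda_g=\frac{2^{2g-1}-1}{2^{2g-1}}\frac{|B_{2g}|}{(2g)!},
\]
and the $\lambda_{g-1}^3$ or $\lambda_g\lambda_{g-1}$ formulas, whose denominators are controlled by $(2g)!/B_{2g}$. More flexibly, the generating series $\sum_g t^{2g}\int_{\M_{g,n}}\psi_1^{a}\lambda_g\cdots$ together with the $\lambda_g$-conjecture (a multinomial times the $n=1$ value) produce products of Bernoulli coefficients. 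The most promising choice is integrals whose generating function is exactly $(t/2\over \sin(t/2))^{x}$ with $x=n$ or $2g-1$, since the $\lambda_g\lambda_{g-1}$ and Hurwitz/ELSV specialisations give such expressions. Then property~\ref{item : minkowski prop 3} applied in reverse shows that $T_{2g}$ is attained as an lcm over integer specialisations of $x$. The subtle point is that the lcm of denominators of $P_m(x)$ over \emph{integer} $x$ equals $T_m$, which needs a Bhargava-style argument using property~\ref{item : minkowski prop 2}: a polynomial that is integer-valued on primes has coefficients with denominator dividing $(m+1)!_{\PP}$, with equality attained.

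\textbf{Main obstacle.} I expect the main obstacle to be the lower bound at small primes $p=2,3$ and higher prime powers. The naive Bernoulli denominators only give $\lfloor 2g/(p-1)\rfloor$, and they miss the $i\ge1$ terms of Definition~\ref{def : Minkowski numbers}. For these I would first try $\lambda_g$-integrals with many marked points, where multinomial coefficients over large $n$ introduce extra $p$-power denominators, and match them with the Norlund polynomial evaluated at $x=n$.
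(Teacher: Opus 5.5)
You have a genuine gap in each half of the argument.

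\textbf{Upper bound.} The graph-sum reduction cannot close as an induction on $g$. Mumford's formula for $\mathrm{ch}_{2k-1}(\EE)$ contains the interior terms $\kappa_{2k-1}$ and $\psi_i^{2k-1}$. So the one-vertex graph of genus $g$ contributes genus-$g$ descendant integrals, and you need as an \emph{input} that every genus-$g$ $\psi$-integral has denominator dividing $T_{2g}$. This is not known. At $p=3$ it is exactly the previously open upper-bound half of part (ii) of the Liu--Xu conjecture, which the paper obtains only as a corollary of $\D_g\mid T_{2g}$. You also do not substantiate the claim that the edge and leg weights, the exponentiated terms $B_{2k}/(2k(2k-1))$ (Stirling-series coefficients) and the factors $1/|\Aut\Gamma|$ all have denominators controlled by N\o rlund polynomials. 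The missing idea is geometric. For $N\ge 3$, the normalisation $\M_{g,n}(N)$ of $\M_{g,n}$ in the level-$N$ cover is a projective variety with a finite map $\bar f$ of degree $|\Sp_{2g}(\ZZ/N\ZZ)|$. The projection formula then gives $\int_{\M_{g,n}}I=|\Sp_{2g}(\ZZ/N\ZZ)|^{-1}\int_{\M_{g,n}(N)}\bar f^*I\in|\Sp_{2g}(\ZZ/N\ZZ)|^{-1}\ZZ$ for every Hodge monomial $I$. Minkowski's identity $\gcd\{|\Sp_{2g}(\FF_q)| : q\ge 3\text{ prime}\}=T_{2g}$ then finishes the upper bound.

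\textbf{Lower bound.} Both mechanisms you propose fail. By the $\lambda_g$-formula you quote, $\lambda_g$-integrals with many points are an \emph{integer} multinomial times $\int_{\M_{g,1}}\psi_1^{2g-2}\lambda_g$, so they add no $p$-powers to the denominator. Integer specialisations also do not recover the content of a polynomial. For example, $P_2(x)=x(3x-1)/24$ has coefficient denominator $T_2=24$, but $n(3n-1)$ is always even, so the lcm of the denominators of $P_2(n)$ over $n\in\ZZ$ is $12$. No Bhargava-type argument rescues that step as stated.

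No specialisation is needed. In the Faber--Pandharipande identity each coefficient of $x^i$ is itself the Hodge integral $\int_{\M_{g,1}}\psi_1^{2g-2+i}\lambda_{g-i}$. So it suffices to show that $T_{2g}Q_{2g}(x)$ is primitive, since the shift $x\mapsto x+1$ preserves content. The primes $p=2,3$, which you single out as the main obstacle, are then the easy ones. The leading coefficient is $\int_{\M_{g,1}}\psi_1^{3g-2}=1/(24^g g!)$, and $\nu_p(24^g g!)=\nu_p(T_{2g})$ for $p=2,3$, prime-power contributions included.

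The real work is $p\ge 5$, where you must produce the extra $\nu_p(q!)$ beyond $q=\lfloor 2g/(p-1)\rfloor$. Expand $(\sin t/t)^{-x}=\sum_g t^{2g}\sum_m A_{g,m}x^{\overline m}$, and write $g=qs+r$ with $s=(p-1)/2$ and $0\le r<s$. The partition with $r$ parts equal to $1$ and $q$ parts equal to $s$ contributes $\pm1/(r!\,q!\,6^r\,p!^q)$, whose denominator has $\nu_p=q+\nu_p(q!)=\nu_p(T_{2g})$. A Legendre-type estimate shows that every other partition of length $q+r$ has strictly smaller $p$-adic denominator. Now take the largest $m$ realising the maximal $p$-adic denominator, and use that $x^{\overline m}$ is monic with integer coefficients. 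This gives a coefficient of $T_{2g}Q_{2g}$ prime to $p$; the substitution $t/2\mapsto t$ only affects the prime $2$. The factor $1/q!$ from the expansion in $x$ is exactly what individual Bernoulli numbers cannot see.
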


As an immediate corollary, we obtain parts (i) and (ii) of Conjecture~\ref{conj : Liu Xu orders}.

\begin{corollary}\label{cor : conj LX i ii}
    For any $g\ge 2$, the following hold.
    \begin{enumerate}
        \item[(i)] $\nu_2(\mathcal{D}_g) = 3g+\nu_2(g!)$
        \item[(ii)] $\nu_3(\mathcal{D}_g) = g+\nu_3(g!)$
    \end{enumerate}
\end{corollary}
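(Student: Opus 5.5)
Both parts follow from Theorem~\ref{thm : main thm} by sandwiching $\mathcal{D}_g$ between a single explicit $\psi$-integral from below and $\D_g=T_{2g}$ from above, and checking that the $2$-adic and $3$-adic valuations of the two bounds agree.

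\emph{Upper bound.} Every integral in the definition of $\mathcal{D}_g$ is the genus $g$ Hodge integral with all $l_j=0$. Hence $\mathcal{D}_g$ divides $\D_g$, and by Theorem~\ref{thm : main thm} we get $\nu_p(\mathcal{D}_g)\le \nu_p(T_{2g})$ for every prime $p$. We now evaluate Definition~\ref{def : Minkowski numbers} at $m=2g$.
\begin{itemize}
\item For $p=2$, $\nu_2(T_{2g})=\sum_{i\ge 0}\lfloor 2g/2^i\rfloor = 2g+g+\sum_{i\ge 2}\lfloor g/2^{i-1}\rfloor$. By Legendre's formula this equals $3g+\nu_2(g!)$.
\item For $p=3$, $\nu_3(T_{2g})=\sum_{i\ge0}\lfloor 2g/(2\cdot 3^i)\rfloor=\sum_{i\ge 0}\lfloor g/3^i\rfloor$. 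This equals $g+\nu_3(g!)$, again by Legendre.
\end{itemize}
So the upper bounds in (i) and (ii) hold.

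\emph{Lower bound.} It suffices to exhibit one integral allowed in the definition of $\mathcal{D}_g$ whose denominator has exactly these valuations. I would take $n=1$ and $a_1=3g-2$, which satisfies $a_1\ge 2$ because $g\ge 2$. By the classical Witten--Kontsevich evaluation (obtained, for instance, from the string/dilaton equations together with the KdV recursion, or directly as the one-point function),
\[
\int_{\M_{g,1}}\psi_1^{3g-2}=\frac{1}{24^g\, g!}.
\]
Its numerator is $1$, so its denominator is $24^g g!$. We have $\nu_2(24^g g!)=3g+\nu_2(g!)$ and $\nu_3(24^g g!)=g+\nu_3(g!)$. This denominator divides $\mathcal{D}_g$, which gives the reverse inequalities and proves (i) and (ii).

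The only nontrivial input besides Theorem~\ref{thm : main thm} is the closed formula for the one-point integral, which is standard. The rest is Legendre-type bookkeeping. The same comparison for $p\ge5$ does not work, because $\nu_p(24^g g!)=\nu_p(g!)$ is much smaller than $\lfloor 2g/(p-1)\rfloor$. This explains why part (iii) of Conjecture~\ref{conj : Liu Xu orders} is not obtained this way.
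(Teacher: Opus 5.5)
Your proof is correct and is essentially the paper's argument. The upper bound comes from $\mathcal{D}_g\mid\D_g$ together with the Legendre-type evaluation of $\nu_2(T_{2g})$ and $\nu_3(T_{2g})$; only the divisibility $\D_g\mid T_{2g}$ is actually needed. The lower bound comes from $\int_{\M_{g,1}}\psi_1^{3g-2}=1/(24^g g!)$, and your explicit check that $a_1=3g-2\ge 2$ for $g\ge 2$ is a detail the paper leaves implicit.
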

Here, the upper bounds on $\nu_2(\mathcal{D}_g)$ and $\nu_3(\mathcal{D}_g)$ follow from $\nu_p(\mathcal{D}_g)\le \nu_p(\D_g)$ and equations \eqref{eqn : v2 T2n} and \eqref{eqn : v3 T2n}, which state that the right-hand sides of (i) and (ii) are precisely $\nu_2(T_{2g})$ and $\nu_3(T_{2g})$, respectively.
The lower bounds follow from \eqref{eqn : 24^g g!}, which states that $24^g g!$ is the denominator of an appropriate $\psi$-class integral in genus $g$.

\begin{remark}
    Part (i) was proved differently in \cite[Theorem~4.3~(iv)]{MR2595544}
    by induction using the DVV recursion.
\end{remark}

Another immediate consequence is the generalisation of \cite[Theorem~4.3]{MR2595544} to Hodge integrals with $\lambda$ classes.
It follows immediately from Theorem~\ref{thm : main thm} and Definition~\ref{def : Minkowski numbers}.
\begin{corollary}\label{cor : 2g+1}
    Let $p$ be a prime number.
    Then
    \begin{enumerate}
        \item[(i)] If $p>2g+1$ then $p\nmid \D_g$.
        \item[(ii)] For any prime $p\le 2g+1$, we have $p\mid \D_g$.
        \item[(iii)] If $2g+1$ is prime then $\nu_{2g+1}(\D_g)=1$.
    \end{enumerate}
\end{corollary}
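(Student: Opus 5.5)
Given Theorem~\ref{thm : main thm}, the corollary reduces to a statement about the $p$-adic valuation of the Minkowski number $T_{2g}$. By Definition~\ref{def : Minkowski numbers},
\[
\nu_p(\D_g)=\nu_p(T_{2g})=\sum_{i=0}^\infty \left\lfloor\frac{2g}{(p-1)p^i}\right\rfloor,
\]
and each of (i)--(iii) comes from examining which terms of this sum are nonzero. The sum is finite, since the terms vanish once $(p-1)p^i>2g$, and every term is a non-negative integer.

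For (i), suppose $p>2g+1$. Then $p-1>2g$, so $(p-1)p^i\ge p-1>2g$ for every $i\ge 0$. Hence every floor vanishes, $\nu_p(\D_g)=0$, and $p\nmid\D_g$.

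For (ii), suppose $p\le 2g+1$. Then $p-1\le 2g$, so the $i=0$ term satisfies $\lfloor 2g/(p-1)\rfloor\ge 1$. Since all other terms are non-negative, $\nu_p(\D_g)\ge 1$ and $p\mid\D_g$. When $g=0$ the only candidate is $p\le 1$, which is no prime, so the statement is vacuous; this is consistent with $T_0=1$.

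For (iii), let $p=2g+1$ be prime. The $i=0$ term is $\lfloor 2g/(p-1)\rfloor=1$. For $i\ge1$ we have $(p-1)p^i\ge (p-1)p>2g$, because $g\ge1$ forces $p\ge3$ and $p-1=2g>0$. So those terms vanish and $\nu_{2g+1}(\D_g)=1$.

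All the substance lies in Theorem~\ref{thm : main thm}. The only step needing any care is checking that the higher terms $i\ge1$ vanish in (iii), which is immediate.
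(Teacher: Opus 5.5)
Your proposal is correct and matches the paper, which likewise deduces all three parts directly from $\D_g=T_{2g}$ (Theorem~\ref{thm : main thm}) and the formula for $\nu_p(T_{2g})$ in Definition~\ref{def : Minkowski numbers}. Your term-by-term analysis of the sum $\sum_i \lfloor 2g/((p-1)p^i)\rfloor$ is exactly the routine check the paper leaves implicit.
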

\begin{remark}
    For $g\ge 2$, part (i) was proved differently in \cite[Theorem~4.6]{MR2595544} using Mumford's formula for $\text{ch}(\EE)$ \cite[\S5]{MR717614} and Faber's algorithm \cite{MR1714822}.
    Similarly, part (ii) also follows from \cite[Theorem~4.3~(ii)]{MR2595544} and $\mathcal{D}_g\mid\D_g$.
\end{remark}

Finally, we obtain a generalisation of Theorem~\ref{thm : Liu Xu auts}.
Liu--Xu observed that its statement does not hold when \emph{Riemann surface} is replaced by \emph{stable curve}.
Our result shows that this is fixed if one simultaneously replaces $\mathcal{D}_g$ by $\D_g$.

\begin{corollary}\label{cor: aut orders divide Dg}
    For $0\le g'\le g$ and an algebraically closed field $k$ of arbitrary characteristic, the order of the automorphism group of any pointed stable curve over $k$ of genus $g'$ divides $\D_g$.
\end{corollary}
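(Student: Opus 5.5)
Given Theorem~\ref{thm : main thm}, it suffices to show that for every pointed stable curve $(C,p_1,\dots,p_n)$ of genus $g'\le g$ over $k$ with automorphism group $G$, and every prime $p$, we have $\nu_p(|G|)\le \nu_p(T_{2g})$. Since $T_{2g'}\mid T_{2g}$ for $g'\le g$ (each summand $\lfloor 2g'/((p-1)p^i)\rfloor$ is monotone in $g'$), we may take $g'=g$.

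The plan is to embed $G$, or at least each Sylow $p$-subgroup $G_p$, into a finite subgroup of some $\GL_m(\QQ)$ with $m\le 2g$ and then invoke property~\ref{item : minkowski prop 1}. Since $p$-parts are tested one prime at a time, the Sylow subgroups are enough.

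\textbf{Step 1 (reduction to characteristic zero).} Over $k$ of characteristic $\ell>0$ we lift. The pointed stable curve $(C,p_i)$ together with the action of $G$ defines a $k$-point of the stack quotient $[\M_{g,n}/\text{(trivial)}]$ with stabiliser $G$. Because $\M_{g,n}$ is smooth over $\mathbb{Z}$ and its inertia is finite and unramified, the automorphism group scheme is finite unramified over $\M_{g,n}$. The standard stack-theoretic argument then applies. First, $\M_{g,n}$ is étale-locally at the point a quotient $[U/G]$ with $U$ smooth over $W(k)$, using tame/Luna-type slices, which exist for Deligne--Mumford stacks étale-locally as $[U/G]$. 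Second, the $G$-fixed locus of $U$ is smooth over $W(k)$ in the tame case. For wild $p$-groups, we instead use the point of $\M_{g,n}$ and upper semicontinuity of stabilisers in the opposite direction: this needs care.

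A cleaner alternative, which I would try first, avoids lifting altogether. Let $G$ act faithfully on a suitable $\ZZ$-lattice of rank at most $2g$ whose reduction detects $G$. For a stable curve, take $H^1_{\text{ét}}(C,\ZZ_q)$ with $q\ne \ell,p$, together with the combinatorial data of the dual graph and the markings. The action on $H^1$ is a representation of rank $2g$ defined over $\ZZ_q$. Its character is $\QQ$-valued and independent of $q$ (Lefschetz trace formula), so by Schur's argument its image's order is controlled by $T_{2g}$.

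\textbf{Step 2 (faithfulness, the main obstacle).} The action of $G$ on $H^1(C)$ need not be faithful. Counterexamples include rational components with many special points, elliptic tails with the automorphism $-1$, and involutions of genus-$0$ pieces permuting nodes. The kernel $K$ acts trivially on cohomology. Such automorphisms act on each component as automorphisms of $(\PP^1,\text{special points})$, which are trivial as soon as three special points are fixed, or as elements of finite order on components of positive genus acting trivially on their $H^1$. I would analyse $K$ by looking at the normalisation. A component of positive genus $h$ with an automorphism trivial on $H^1$ is handled by the classical bound: for genus at least $2$ such an automorphism is trivial, and for genus $1$ the induced element of $\Aut(E,\text{pts})$ lies in a small group. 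So $K$ is built from contributions of genus-$0$ and genus-$1$ pieces whose $p$-parts can be bounded by counting how much genus is available.

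The cleaner route is to pass to the full lattice $\Lambda=H^1(\widetilde C)\oplus \ZZ^{\text{edges}}$ or similar, of rank still at most $2g$ after quotienting by what the markings rigidify. I expect this rank bookkeeping, keeping the rank of a faithful rational representation of $G_p$ at most $2g$, to be the real difficulty. Once it is established, Minkowski's theorem~\ref{item : minkowski prop 1} gives $\nu_p(|G|)\le\nu_p(T_{2g})=\nu_p(\D_g)$ for every prime $p$, and hence $|G|$ divides $\D_g$.
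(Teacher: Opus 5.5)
The gap is at exactly the step you call the main obstacle, and that obstacle is misdiagnosed. The missing input is that $\Aut(C,p_1,\dots,p_n)$ acts \emph{faithfully} on the generalised Jacobian $\Pic^0(C)$. This is Deligne--Mumford's Theorem~(1.13), stated there for unpointed curves of genus $\ge 2$; the pointed case follows by gluing a smooth genus-$2$ curve at each $p_i$ and extending automorphisms by the identity on these tails. Since $q$-power torsion is dense in the semiabelian variety $\Pic^0(C)$, the action is then also faithful on its $q$-adic Tate module, i.e.\ on $H^1_{\text{\'et}}(C,\ZZ_q)$, for every prime $q\ne\mathrm{char}\,k$.

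Your proposed counterexamples are not counterexamples. The map $-1$ on an elliptic tail acts as $-1$ on the $H^1$ of that tail. An automorphism mapping a rational component to itself non-trivially must move some of its node branches, because markings are fixed and an automorphism of $\PP^1$ fixing three points is the identity. It therefore either acts non-trivially on $H_1(\Gamma)$ (e.g.\ $z\mapsto 1/z$ on a once-marked nodal cubic inverts $\GG_m$) or permutes positive-genus subcurves whose Jacobians are factors of $\Pic^0(C)$.

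Without faithfulness, nothing in your write-up actually bounds $|G|$. Step~1 is abandoned (you note yourself that the wild case ``needs care''). The lattice $H^1(\widetilde C)\oplus\ZZ^{\text{edges}}$ is the wrong object: the number of nodes can be as large as $3g'-3+n$, which is unbounded in terms of $g'$ as $n$ grows, and you do not say what to quotient by.

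Once faithfulness is available, no rank bookkeeping is needed. The paper works with $\Pic^0(C)$ directly:
\begin{itemize}
\item It is semiabelian of dimension $g'$, an extension of $E=\prod_v\Pic^0(\widetilde{C_v})$ by the torus $T=\GG_m^{b_1(\Gamma)}$.
\item A finite group of group automorphisms preserves $T$ and injects into the product of its images on $T$ and on $E$. An automorphism trivial on both differs from the identity by a homomorphism $E\to T$, which is trivial.
\item The image on $T$ lies in $\GL_{b_1(\Gamma)}(\ZZ)$, so Minkowski applies.
\item The image on $E$ has order dividing $T_{2\dim E}$ in any characteristic (R\"ossler--Schr\"oer).
\item Finally, $T_bT_c\mid T_{b+c}$ and $T_{2g'}\mid T_{2g}=\D_g$.
\end{itemize}

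Your \'etale route can also be completed, with two corrections. First, $H^1_{\text{\'et}}(C,\ZZ_q)$ has rank $b_1(\Gamma)+2\sum_v g(\widetilde{C_v})=2g'-b_1(\Gamma)$, not $2g'$. Second, rationality and $q$-independence of its character for singular $C$ should come from the equivariant sequence $0\to H^1(\Gamma,\ZZ_q)\to H^1(C,\ZZ_q)\to H^1(\widetilde C,\ZZ_q)\to 0$ together with integrality of traces of abelian-variety endomorphisms on Tate modules, not from a Lefschetz formula on a singular curve. Schur's rational-trace form of Minkowski's theorem then gives $|G|\mid T_{2g'-b_1(\Gamma)}$. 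This also covers the wild part in characteristic $\ell$, because $q\ne\ell$. The route replaces the paper's appeal to a Minkowski-type bound for abelian varieties by Schur's theorem.
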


We prove this as Theorem~\ref{thm : cor aut} in Section~\ref{sec : cor proofs}.
In fact, Liu--Xu \cite[\S 4]{MR2357474} introduced
\[
\mathcal{S}_g := \lcm\left\{|\Aut(C)| : C \text{ is a complex stable curve of genus }g\right\}
\]
for $g\ge 2$ and observed that
\begin{align}
    \nu_2(\mathcal{S}_g)&\ge 2g + \nu_2(g!),
    \label{eqn : S_g ineq at 2}
    \\
    \nu_p(\mathcal{S}_g)
    &\ge k+\nu_p(k!),\,\,\, p\ge 3 \text{ prime},\,\,\,k=\left\lfloor\frac{2g}{p-1}\right\rfloor
    \label{eqn : S_g ineq at p}.
\end{align}
They conjectured that these inequalities are in fact equalities.
Since $\nu_p(T_{2g})=k+\nu_p(k!)$ (cf. Lemma~\ref{lem : nu_T_eg rewrite}), Theorem~\ref{thm : main thm} and Corollary~\ref{cor: aut orders divide Dg} imply the conjectured equality in \eqref{eqn : S_g ineq at p}.
In particular, $\mathcal{S}_g$ and $T_{2g}$ coincide up to a power of $2$.

\begin{remark}
    A different proof that $\mathcal{S}_g$ divides $T_{2g}$ appeared in the recent independent work \cite[\S4.2]{Philip2026}, which moreover discusses stable curves $C$ maximising $\nu_p(|\Aut(C)|)$.
    In particular, it seems that equality in \eqref{eqn : S_g ineq at 2} depends on the characteristic of the base field used to define $\mathcal{S}_g$.
\end{remark}

\subsection{Proof strategy}

While our proof strategy for Theorem~\ref{thm : main thm} uses analogues of properties \ref{item : minkowski prop 1} and \ref{item : minkowski prop 3} of the Minkowski numbers, it would be interesting to see a different proof using properties \ref{item : minkowski prop 2} or \ref{item : minkowski prop 4} instead.
Our strategy is as follows.
\begin{enumerate}
    \item[(i)] In Section \ref{sec : upper bound}, we prove $\D_g \mid T_{2g}$ (Corollary~\ref{cor : upper bound}) using compactified Galois covers of $\mathcal{M}_{g,n}$ with Galois group $\Sp_{2g}(\ZZ/N\ZZ)$, coming from the abelian level structure of order $N$ (Proposition~\ref{prop : geometry prop}).
    Varying $N$ and taking the greatest common divisor of $|\Sp_{2g}(\FF_q)|$ for primes $q\ge 3$ gives rise to $T_{2g}$ (Lemma~\ref{lem : gcd sp}).

    \item[(ii)] In Section \ref{sec : lower bound}, we show $T_{2g}\mid \D_g$ (Corollary~\ref{cor : T_2g divides D_g}) by exhibiting a suitable family of Hodge integrals.
    Specifically, we use from \cite[Theorem~2]{MR1728879} that
    \[
    \left(\frac{t/2}{\sin(t/2)}\right)^{x+1} 
    =
    1 + \sum_{g=1}^\infty t^{2g} \sum_{i=0}^g x^i\int_{\M_{g,1}}\psi_1^{2g-2+i}\lambda_{g-i}.
    \]
    Thus, it suffices to prove that the least common denominator of the coefficients of $Q_{2g}(x)\in\QQ[x]$ in the expansion
    \[
    \left(\frac{t/2}{\sin(t/2)}\right)^x = \sum_{g=0}^\infty Q_{2g}(x) t^{2g}
    \]
    is precisely $T_{2g}$ (see Proposition~\ref{prop : T_2g Q_2g primitive}).
\end{enumerate}
Thus, Theorem~\ref{thm : main thm} follows immediately from (i) and (ii).

\subsection{Statement on the use of generative AI}

This paper is entirely human-written.
The following paragraphs concern the human/AI interaction involved in the development of the underlying ideas.

This project grew out of a numerical observation made during the development of \cite{holmes} and the associated software package \textsf{GKMtools} \cite{GKMtools}.
The author submitted resulting conjectures to \textsf{IMProofBench} \cite{schmitt2025improofbenchbenchmarkingairesearchlevel}, a platform to benchmark current leading AI systems against research-level mathematical questions.
While most models produced technically wrong answers, some were wrong in an interesting way that helped to refine the conjecture and identify potential proof methods.

In summary, the strategy used in \S\ref{sec : divisibility lemmas} was correctly provided by OpenAI's \textsf{GPT-5.4 Pro} in \textit{extended} mode. The approach of \S\ref{sec : upper bound} using level structures was suggested via \textsf{IMProofBench} by the following models: \textsf{GPT-5.2}, \textsf{GPT-5.2 Pro}, \textsf{Gemini-3.1 Pro Preview}, \textsf{Claude Opus 4.6}, and \textsf{GPT-5}.
Although very useful, it still took non-trivial human effort to fill in the technical gaps and mistakes contained in those attempts.
In particular, there was a previous stage of the human/AI interaction at which all leading models in \textsf{IMProofBench} uniformly suggested a superficially good-looking approach for \S2 that was technically bound to fail.
On the other hand, a previous version of the conjecture wrongly proposed $\D_g$ to coincide with the sequence \cite[\href{https://oeis.org/A277001}{A277001}]{oeis} of denominators appearing in Luschny's approximation to the gamma function (see \cite[Example~6.1]{MR3459588}).
This was pointed out via \textsf{IMProofBench} by \textsf{Gemini-3.1 Pro Preview}.

\subsection{Acknowledgements}
I thank Johannes Schmitt for introducing me to \textsf{IMProofBench} during my visit to ETH Z{\"u}rich from November 2025 to January 2026, Rahul Pandharipande for hosting me there, and my advisor Tam{\'a}s Hausel for many helpful discussions and for funding this visit.
Further thanks are due to Marco Boggi for providing useful references on abelian level structures; Jakub L{\"o}wit, Johannes Schmitt, Shiyu Shen, Tanguy Vernet, and Angelo Vistoli for providing references and discussion on the intersection theory of stacks; and Mischa Elkner, Giosu{\`e} Muratore, and Yannik Schuler for comments that helped to improve this draft.
Finally, I thank Carel Faber for suggesting to extend Corollary~\ref{cor: aut orders divide Dg} to fields of arbitrary characteristic.
This work was supported by the ERC grant ViaFiPoS, No 101199663 and, from September 2026, by a DOC Fellowship of the Austrian Academy of Sciences.

\section{An upper bound using level structures}
\label{sec : upper bound}

In this section, we prove the upper bound $\D_g\mid T_{2g}$.

We shall use the notion of \emph{level structures} for $\mathcal{M}_{g,n}$, which in the unpointed case go back at least to \cite{SerreJacobi1960, MR262240}, cf. \cite[\S XVI]{MR2807457}.
The general theme here is the search for Galois covers of $\mathcal{M}_{g,n}$, preferably by schemes \cite{MR1257324, MR1739177}.
To extend these to covers of $\M_{g,n}$, there are at least two different methods in the literature.
The first one, which we use, is by normalisation (cf. \cite{MR717614, MR801921}).
This produces a cover of $\M_{g,n}$ by a projective variety, which has the pleasant feature that degrees of integrally defined $0$-cycles on it are integers.
The second one, which we mention for completeness, is by extending the notion of level structures to stable (possibly nodal) curves in suitable ways and considering the resulting moduli functors \cite{MR2007376}.

We shall also use Chow groups with rational coefficients for Deligne--Mumford stacks, as developed in \cite{MR1005008}.
For a separated Deligne--Mumford stack $\mathcal{X}$ of finite type over $\mathbb{C}$, let $A_*(\mathcal{X})_\QQ$ be its Chow group with rational coefficients and let $A^*(\mathcal{X})_\QQ$ be its Chow cohomology ring.
For a scheme $X$
of finite type over $\CC$, we write $A_*(X)$ and $A^*(X)$ for its Chow group and Chow cohomology ring with integer coefficients, respectively 
\cite[\S1-3,\,\S17.1-3]{MR1644323}.
Although we will not need it, let us mention that an intersection theory for Deligne--Mumford stacks with integer coefficients was developed in \cite{MR1614555, MR1719823}, see also \cite[Appendices~B,C]{MR4430955}.
The resulting integral Chow groups differ only by torsion and agree on algebraic spaces \cite[Theorem~2.1.12~(i),(ii)]{MR1719823}.

\begin{proposition}\label{prop : geometry prop}
    For any $N\ge 3$, we have $\D_g \mid |\Sp_{2g}(\ZZ/N\ZZ)|$.
\end{proposition}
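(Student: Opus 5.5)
We construct a finite cover of $\M_{g,n}$ by a projective variety of degree $|\Sp_{2g}(\ZZ/N\ZZ)|$ on which all $\psi$ and $\lambda$ classes pull back to integral Chow cohomology classes. Integrality then follows from the projection formula.

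First, fix $g$, $n$ with $2g-2+n>0$ and $N\ge 3$, and let $\mathcal{M}_{g,n}[N]$ be the moduli of smooth $n$-pointed genus $g$ curves with a full symplectic level-$N$ structure on $H^1(C,\ZZ/N\ZZ)$ (or on $\mathrm{Jac}(C)[N]$). By Serre's lemma, an automorphism of a smooth curve of genus $g\ge 1$ acting trivially on $H^1(C,\ZZ/N)$ with $N\ge 3$ is trivial. For $g=0$ the pointed curves have no automorphisms, and $\Sp_0$ is trivial. Hence $\mathcal{M}_{g,n}[N]$ is a smooth quasi-projective scheme, and $\mathcal{M}_{g,n}[N]\to\mathcal{M}_{g,n}$ is a finite étale Galois cover with group $\Sp_{2g}(\ZZ/N\ZZ)$. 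Connectedness holds because the monodromy $\Gamma_{g,n}\to\Sp_{2g}(\ZZ)\to\Sp_{2g}(\ZZ/N)$ is surjective (strong approximation). We then take $\overline{M}:=\M_{g,n}[N]$ to be the normalisation of $\M_{g,n}$ in the function field of $\mathcal{M}_{g,n}[N]$. Then $\overline{M}$ is a normal projective variety with a finite surjective morphism $\pi\colon \overline{M}\to\M_{g,n}$ of degree $|\Sp_{2g}(\ZZ/N\ZZ)|$. The main technical point here is to check that $\overline{M}$ is a scheme rather than a stack. Following Looijenga and Boggi--Pikaart, one argues that automorphism groups of stable curves act freely on the level structures that extend, at least generically. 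Failing that, I would settle for $\overline{M}$ being a proper algebraic space. Degrees of integral $0$-cycles on a proper algebraic space are still integers, by Kresch/Vistoli's comparison cited above.

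Second, the pullbacks $\pi^*\EE$ and $\pi^*\mathcal{L}_i$ are honest vector bundles on the scheme $\overline{M}$, since the morphism $\overline{M}\to\M_{g,n}$ to the stack gives pulled-back bundles. Therefore $\pi^*\psi_i=c_1(\pi^*\mathcal{L}_i)$ and $\pi^*\lambda_j=c_j(\pi^*\EE)$ lie in the integral Chow cohomology $A^*(\overline{M})$. Any monomial $\alpha=\prod\psi_i^{a_i}\prod\lambda_j^{l_j}$ of top degree then satisfies
\[
|\Sp_{2g}(\ZZ/N\ZZ)|\int_{\M_{g,n}}\alpha=\int_{\overline{M}}\pi^*\alpha\cap[\overline{M}]\in\ZZ,
\]
using the projection formula for proper representable morphisms of Deligne--Mumford stacks (Vistoli): $\pi_*[\overline{M}]=\deg(\pi)[\M_{g,n}]$. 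Other degrees give zero. Thus every genus $g$ Hodge integral has denominator dividing $|\Sp_{2g}(\ZZ/N\ZZ)|$, and taking the lcm gives $\D_g\mid|\Sp_{2g}(\ZZ/N\ZZ)|$.

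The main obstacle is the first step, namely the degree count and representability. The stacky degree of $\pi$ must be exactly the group order. This requires that the generic point of $\M_{g,n}$ have trivial automorphisms (true except in a few small cases such as $(g,n)=(1,1),(2,0)$). In those cases the generic automorphism, the hyperelliptic involution or $-1$, acts as $-1$ on $H^1$. Then the field-theoretic degree is $|\Sp|/2$ while the stack has a generic $\ZZ/2$, so the stacky degree is again $|\Sp_{2g}(\ZZ/N)|$. I would handle this by working with $\deg\pi$ computed over the open substack $\mathcal{M}_{g,n}$, where $\mathcal{M}_{g,n}[N]\to\mathcal{M}_{g,n}$ is a $\Sp_{2g}(\ZZ/N)$-torsor, so its degree is the group order.
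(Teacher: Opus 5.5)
Your proposal is correct and follows the paper's proof essentially step for step. Both use the level-$N$ Galois cover of degree $|\Sp_{2g}(\ZZ/N\ZZ)|$, its normalisation over $\M_{g,n}$ to a proper algebraic space (indeed a projective variety), integrality of the degree of the pulled-back $\psi$/$\lambda$ monomial, and the projection formula with $\bar f_*[\M_{g,n}(N)]=D[\M_{g,n}]$. Your remarks on connectedness and on the generic $\ZZ/2$ stabiliser for $(g,n)=(1,1),(2,0)$ are correct refinements the paper leaves implicit: there the degree must be computed via the torsor over $\mathcal{M}_{g,n}$ rather than the coarse function field.

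The one wording to fix is ``act freely \dots at least generically''. Generic freeness would not suffice: integrality needs trivial stabilisers at every point of $\overline{M}$, since otherwise fractional degrees reappear, as in $\int c_1(\mathcal{O}(1))=1/2$ on the stack $\PP(1,2)$. Trivial stabilisers everywhere is exactly the content of $\overline{M}$ being an algebraic space for $N\ge 3$. Your ``fallback'' therefore is not weaker than the scheme case, and both you and the paper take this fact from the literature.
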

\begin{proof}
    A \emph{level $N$ structure} on a smooth stable curve $C$ of genus $g$ over $\CC$ is an isomorphism
    \[
    H^1(C,\ZZ/N\ZZ)\stackrel{\cong}{\longrightarrow}(\ZZ/N\ZZ)^{2g}
    \]
    of symplectic $\ZZ/N\ZZ$-modules, where the symplectic structure is given by the cup product on the left and by a fixed standard symplectic structure on the right.
    More generally, if $S$ is a scheme over $\CC$ and $C\rightarrow S$ is a smooth stable curve of genus $g$ over $S$, then a \emph{level $N$ structure} of $C\rightarrow S$ is a symplectic isomorphism $J[N]\stackrel{\cong}{\longrightarrow}(\ZZ/N\ZZ)^{2g}_S$ where $J[N]$ is the $N$-torsion of the relative Jacobian $J=\Pic_{C/S}^0$, endowed with its symplectic structure via the Weil pairing.

    Let $\mathcal{M}_{g,n}(N)$ be the moduli stack of smooth stable curves of genus $g$ with $n$ marked points, equipped with a level $N$-structure.
    Let $f\colon \mathcal{M}_{g,n}(N)\rightarrow \mathcal{M}_{g,n}$ be the forgetful morphism.

    Surveys of the following facts, including marked points, may be found in \cite[\S2]{MR2242960}, see also \cite[\S1]{MR1739177}.

    \begin{itemize}
        \item $f\colon \mathcal{M}_{g,n}(N)\rightarrow \mathcal{M}_{g,n}$ is a finite surjective representable {\'e}tale Galois cover of degree $D:=|\Sp_{2g}(\ZZ/N\ZZ)|$.
        \item Since $N\ge 3$, $\mathcal{M}_{g,n}(N)$ is an algebraic space \cite{SerreJacobi1960}, and in fact an algebraic variety (cf. \cite[(5.14)]{MR262240} and \cite[Proposition~2.4]{MR2242960}).
        \item Let $\M_{g,n}(N)$ be the normalisation of $\M_{g,n}$ in the function field of $\mathcal{M}_{g,n}(N)$, extending $f$ to $\bar{f}\colon \M_{g,n}(N)\rightarrow \M_{g,n}$.
        \item 
        Since $N\ge 3$, $\M_{g,n}(N)$ is a proper algebraic space \cite[\S3]{MR801921}, and in fact a projective variety (cf. \cite[Proposition~2.5]{MR2242960}).
    \end{itemize}

Hence we have the following commutative diagram, where $\bar{f}$ restricts to $f$ over $\mathcal{M}_{g,n}(N)$.
    \[
    \begin{tikzcd}
        \M_{g,n}(N) \ar["\bar f"]{r}\ar["\pi_2"']{dr}
        &
        \M_{g,n} \ar["\pi_1"]{d}
        \\
        &\text{Spec}(\CC)
    \end{tikzcd}
    \]
    Note that $\bar f$ is proper by \cite[\href{https://stacks.math.columbia.edu/tag/0CPT}{Lemma 0CPT}]{stacks-project} since $\M_{g,n}(N)\rightarrow\text{Spec}(\CC)$ is proper and $\M_{g,n}\rightarrow \text{Spec}(\CC)$ is separated.
    In the following, we will use the proper pushforward $\bar f_*\colon A_*(\M_{g,n}(N))_\QQ\rightarrow A_*(\M_{g,n})_\QQ$ and the projection formula
    $\bar f_*(\bar f^*(\alpha)\frown z) = \alpha\frown \bar f_*(z)$ for $\alpha\in A^*(\M_{g,n})_\QQ$ and $z\in A_*(\M_{g,n}(N))_\QQ$.
    
    Let $I:=\psi_1^{a_1}\dots\psi_n^{a_n}\lambda_1^{l_1}\dots\lambda_g^{l_g}\in A^*(\M_{g,n})_\QQ$ and note that we may assume \[\deg(I) = a_1+\dots+a_n+l_1+2l_2 +\dots+gl_g=\dim\M_{g,n}\] as otherwise $\int_{\M_{g,n}}I=0$.
    By the above hypotheses, we have 
    \[
        \bar f_*[\M_{g,n}(N)] = D[\M_{g,n}]\in A_{\dim\M_{g,n}}(\M_{g,n})_\QQ.
    \]
    Hence, the Hodge integral is given by
    \begin{align}\label{eqn : key eqn}
    \int_{\M_{g,n}}I &= (\pi_1)_*(I\frown [\M_{g,n}])\nonumber
    \\&= \frac{1}{D}(\pi_1)_*(I\frown \bar f_*[\M_{g,n}(N)])\nonumber
    \\ &= \frac{1}{D}(\pi_1)_*\bar f_*(\bar f^*I\frown [\M_{g,n}(N)])\nonumber
    \\&= \frac{1}{D}(\pi_2)_*(\bar f^*I\frown [\M_{g,n}(N)])
    \in A_0(\text{Spec}(\CC))_\QQ.
    \end{align}

Note that the final expression in \eqref{eqn : key eqn} is computed purely scheme-theoretically.
    Indeed, $\bar f^*I$ is a product of Chern classes of vector bundles on the projective variety $\M_{g,n}(N)$ pulled back from $\M_{g,n}$.
    Since the degree of an integrally defined $0$-cycle on a proper scheme is an integer 
    \cite[Definition~1.4]{MR1644323}
    and the classes $\bar f^*I\in A^*(\M_{g,n}(N))$ and $[\M_{g,n}(N)]\in A_*(\M_{g,n}(N))$ are defined in the respective Chow groups with integer coefficients, we obtain
    \[
    (\pi_2)_*(\bar f^*I\frown [\M_{g,n}(N)])\in A_0(\text{Spec}(\CC))=\ZZ.
    \]
    Hence, by equation \eqref{eqn : key eqn},
    \[
    \int_{\M_{g,n}}I\in \frac{1}{D}\ZZ.
    \]
    Since the above argument holds for all $I$ and $n$, we obtain $\D_g \mid D=|\Sp_{2g}(\ZZ/N\ZZ)|$, as required.
\end{proof}

The following lemma may be found in \cite[Formula~3.1]{Silverberg1992}, where it is attributed to the methods of \cite{MR1580123}.
See also \cite[Theorem~6]{MR2336645} for groups of other types.

\begin{lemma}\label{lem : gcd sp}
    We have $\gcd \left\{|\Sp_{2g}(\FF_q)| : q\ge 3\text{ prime}\right \} = T_{2g}$.
\end{lemma}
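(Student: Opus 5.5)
We compare $p$-adic valuations one prime $p$ at a time, using the order formula
\[
|\Sp_{2g}(\FF_q)| = q^{g^2}\prod_{i=1}^{g}(q^{2i}-1).
\]
For a prime $q\ge 3$ with $q\ne p$, the factor $q^{g^2}$ is coprime to $p$. Hence
\[
\nu_p\bigl(|\Sp_{2g}(\FF_q)|\bigr)=\sum_{i=1}^g \nu_p(q^{2i}-1).
\]
The task is to show that the minimum of this quantity over all primes $q\ge 3$ equals $\nu_p(T_{2g})$. The primes $q=p$ never attain the minimum: one can always pick another prime $q$ (via Dirichlet) whose valuation is $\le$ any given one, so these primes may be ignored.

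\textbf{Odd $p$.} Let $d$ be the order of $q$ modulo $p$; it divides $p-1$. By the lifting-the-exponent lemma,
\[
\nu_p(q^{2i}-1)=
\begin{cases}
\nu_p(q^{d}-1)+\nu_p(2i/d), & d\mid 2i,\\
0, & \text{otherwise.}
\end{cases}
\]
Using $\nu_p(q^d-1)\ge 1$ and $\nu_p(2i/d)\ge\nu_p(i)$ (as $p\nmid 2d$), the valuation is at least the number of $i\le g$ with $d\mid 2i$, plus the sum of $\nu_p(i)$ over those $i$. This lower bound is smallest when $d=p-1$, i.e.\ when $q$ is a primitive root mod $p$. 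Set $e=(p-1)/2$ and $k=\lfloor 2g/(p-1)\rfloor$. Then $d\mid 2i$ exactly when $e\mid i$, and these $i=em$ ($m\le k$) contribute $k+\sum_{m\le k}\nu_p(m)$, using $\nu_p(e)=0$. Thus the lower bound is
\[
k+\nu_p(k!).
\]
It is attained by choosing, via Dirichlet, a prime $q$ that is a primitive root mod $p$ with $q^{p-1}\not\equiv 1 \pmod{p^2}$, so that $\nu_p(q^{p-1}-1)=1$. Finally I would check that $k+\nu_p(k!)$ equals $\sum_{j\ge0}\lfloor 2g/((p-1)p^j)\rfloor$, using $\lfloor\lfloor x\rfloor/p^j\rfloor=\lfloor x/p^j\rfloor$. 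This identity is the content of the Lemma on rewriting $\nu_p(T_{2g})$ referred to in the introduction.

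\textbf{$p=2$.} Here $q$ is odd, and LTE gives
\[
\nu_2(q^{2i}-1)=\nu_2(q^2-1)+\nu_2(i).
\]
Since $\nu_2(q^2-1)\ge 3$, with equality for $q\equiv 3,5 \pmod 8$ (e.g.\ $q=3$), the minimum is $3g+\nu_2(g!)$. The target is
\[
\nu_2(T_{2g})=\sum_{j\ge0}\lfloor 2g/2^j\rfloor = 2g+g+\nu_2(g!),
\]
so the two agree.

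The main obstacle is making sure the minimisation is sharp in the odd case. One must justify that non-primitive orders $d$ cannot beat $d=p-1$: a smaller $d$ only enlarges the set of $i$ with $d\mid 2i$ and each contribution stays at least $1+\nu_p(i)$. One must also handle the case $d$ odd, where the condition is $d\mid i$; this gives more terms, not fewer. Separately, the existence of a prime $q$ attaining the bound requires Dirichlet's theorem applied to a residue class mod $p^2$.
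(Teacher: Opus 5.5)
Your argument is correct, but it differs from the paper only in being an argument at all: the paper gives no proof of this lemma and simply cites Silverberg's Formula~3.1, attributed to Minkowski's method. What you wrote is a self-contained version of that classical computation. You apply the lifting-the-exponent lemma to $q^{g^2}\prod_{i=1}^g(q^{2i}-1)$ and minimise over the order $d$ of $q$ modulo $p$, obtaining $k+\nu_p(k!)$ for odd $p$ and $3g+\nu_2(g!)$ for $p=2$. You then realise this minimum by Dirichlet's theorem in a residue class modulo $p^2$ (for odd $p$), or by $q=3$ (for $p=2$).

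The citation buys brevity. Your version makes the paper independent of the reference and shows which $N=q$ in Proposition~\ref{prop : geometry prop} are sharp at each prime. It also makes clear that Dirichlet is needed precisely because the gcd runs over primes. It meshes with the rest of the paper: your final identity is Lemma~\ref{lem : nu_T_eg rewrite}, whose proof works for every prime even though it sits under the standing hypothesis $p\ge 5$, and your $p=2$ count is \eqref{eqn : v2 T2n}. Note also that Corollary~\ref{cor : upper bound} only needs your attainment half, namely the existence of a good $q$. The lower bound over all $q$ follows a posteriori from $T_{2g}\mid\D_g\mid|\Sp_{2g}(\FF_q)|$, since Section~\ref{sec : lower bound} does not use the upper bound.

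Two small points need tidying.

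First, your reason for discarding $q=p$ is not an argument, and ``never attain the minimum'' is false as stated. For $g=1$, $p=3$ one has $\nu_3(|\Sp_2(\FF_3)|)=\nu_3(24)=1=k+\nu_3(k!)$. What you actually need is $k+\nu_p(k!)\le g^2=\nu_p(|\Sp_{2g}(\FF_p)|)$. This holds because $\nu_p(k!)\le (k-1)/(p-1)<k$ for $k\ge1$, so $k+\nu_p(k!)\le 2k-1\le 2g-1\le g^2$.

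Second, the existence of a primitive root $a$ modulo $p$ with $a^{p-1}\not\equiv 1\pmod{p^2}$ deserves one line. If $a$ fails, then $a+p$ works, since $(a+p)^{p-1}\equiv a^{p-1}-pa^{p-2}\pmod{p^2}$.
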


Combining Proposition~\ref{prop : geometry prop} and Lemma~\ref{lem : gcd sp} gives the desired upper bound for $\D_g$.

\begin{corollary}\label{cor : upper bound}
    We have $\D_g\mid T_{2g}$.
\end{corollary}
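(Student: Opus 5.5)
The plan is to combine Proposition~\ref{prop : geometry prop} with Lemma~\ref{lem : gcd sp}, specialising $N$ to odd primes. For every prime $q\ge 3$ we may take $N=q\ge 3$ in Proposition~\ref{prop : geometry prop}. Since $\ZZ/q\ZZ=\FF_q$, the group $\Sp_{2g}(\ZZ/q\ZZ)$ is $\Sp_{2g}(\FF_q)$, and the proposition gives
\[
\D_g \mid |\Sp_{2g}(\FF_q)| \quad\text{for every prime } q\ge 3 .
\]

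Next, an integer dividing every member of a set of positive integers divides their greatest common divisor. So $\D_g$ divides $\gcd\{|\Sp_{2g}(\FF_q)| : q\ge 3 \text{ prime}\}$, and by Lemma~\ref{lem : gcd sp} this gcd equals $T_{2g}$. Hence $\D_g\mid T_{2g}$.

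Two small points need checking. First, $\D_g$ must be a well-defined positive integer, i.e.\ the lcm over infinitely many denominators must be finite. This is not an extra step: the proof of Proposition~\ref{prop : geometry prop} shows that every genus $g$ Hodge integral lies in $\frac{1}{D}\ZZ$ for a fixed $D$, so all the denominators divide $D$ and their lcm is finite. Second, for $g=0$ everything is trivially consistent, since $\Sp_0$ is the trivial group and $T_0=1$. There is no real obstacle here, because all the substance lies in the proposition and the lemma.
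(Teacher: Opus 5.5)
Your proposal is correct and follows the paper's own argument: the paper also obtains this corollary by applying Proposition~\ref{prop : geometry prop} with $N=q$ prime, $q\ge 3$ (so that $\Sp_{2g}(\ZZ/q\ZZ)=\Sp_{2g}(\FF_q)$), and then taking the gcd via Lemma~\ref{lem : gcd sp}. Your side remarks on the finiteness of $\D_g$ and on the case $g=0$ are accurate but not needed beyond what the proposition already provides.
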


\begin{remark}
    In this section, we used abelian level structures of order $N\ge 3$ and obtained an upper bound for $\D_g$ by computing the greatest common divisor of their degrees.
    Alternatively, one could attempt to use the finite surjective morphisms $\mathscr{B}^\mathrm{tei}_{g,n}(G_{\{p_1,p_2\}})\rightarrow \M_{g,n}$ from \cite[Theorem~7.6.4]{MR2007376}, where $\mathscr{B}^\mathrm{tei}_{g,n}(G_{\{p_1,p_2\}})$ is a \emph{smooth} projective variety defined in terms of distinct primes $p_1$ and $p_2$.
    However, more work is required to compute the degrees of these morphisms and their greatest common divisor.
    A similar remark applies to the construction in \cite{MR1257324}.
\end{remark}

\section{Linear Hodge integrals with one marked point}
\label{sec : lower bound}

In this section, we prove that $T_{2g}$ divides $\D_{g}$ by providing a lower bound for the exponent of each prime $p$ in $\D_g$.
For $p\in\{2,3\}$, this easily follows from the Hodge integral in equation~\eqref{eqn : 24^g g!}.
For $p\ge 5$, some more work is required, which is the content of Section~\ref{sec : divisibility lemmas}.
In Section~\ref{sec : FP} we derive the least common denominators appearing in the expansion of $\left(\frac{t/2}{\sin(t/2)}\right)^{x+1}$, which is reminiscent of property \ref{item : minkowski prop 3} of Minkowski numbers.
In Section~\ref{sec : conclusion} we arrive at the desired conclusion $T_{2g}\mid\D_{g}$.

\subsection{Preliminary divisibility lemmas}
\label{sec : divisibility lemmas}

Throughout this subsection, let $g\in\ZZ_{\ge 0}$ and let $p\ge 5$ be a prime.
For any $m\in\{0,\dots,g\}$, define
\begin{equation}\label{eqn : def Agm}
        A_{g,m} := (-1)^{g+m}\sum_{\substack{\lambda \vdash g\\\len(\lambda)=m}} \frac{1}{\prod_i \lambda_i!(2i+1)!^{\lambda_i}}
\end{equation}
where the sum runs over partitions $\lambda$ of $g$ of length $\ell(\lambda)=m$.
That is, $\lambda=(\lambda_1,\lambda_2,\dots\ge 0)$ with $\sum_{i} i\lambda_i = g$ and $\len(\lambda):=\sum_{i}\lambda_i=m$.
We also define the integers
\[
s := \frac{p-1}{2}\ge 2
\,\,\,\,\,\text{and}\,\,\,\,\,
q :=\left\lfloor\frac{2g}{p-1}\right\rfloor
\]
so that
\[
g = qs + r
\,\,\,\,\,\text{for some }r\in\ZZ\text{ with}\,\,\,\,\,
0\le r<s.
\]

The following two lemmas are well-known, but we record them for clarity.
\begin{lemma}[Legendre]\label{lem : Legendre}
    For $N\in\ZZ_{\ge 1}$ and a prime $p$, we have
    \[
        \nu_p(N!) = \sum_{i\ge 1}\left\lfloor\frac{N}{p^i}\right\rfloor = \frac{N - s_p(N)}{p-1}
    \]
    where $s_p(N)$ is the sum of the base $p$ digits of $N$.
    In particular,
    \[
    \nu_p(N!)\le\left\lfloor\frac{N-1}{p-1}\right\rfloor.
    \]
\end{lemma}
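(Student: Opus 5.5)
We prove the first equality by counting factors, deduce the second from the base-$p$ expansion of $N$, and obtain the inequality from the second by a short digit-sum estimate.

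\emph{First equality.} Write
\[
\nu_p(N!)=\sum_{k=1}^N\nu_p(k),\qquad \nu_p(k)=\#\{i\ge 1: p^i\mid k\}.
\]
Exchanging the order of summation gives
\[
\nu_p(N!)=\sum_{i\ge1}\#\{1\le k\le N: p^i\mid k\}=\sum_{i\ge 1}\left\lfloor \frac{N}{p^i}\right\rfloor.
\]
The sum is finite, since the terms vanish once $p^i>N$.

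\emph{Second equality.} Write $N=\sum_{j\ge0} d_jp^j$ with digits $0\le d_j\le p-1$. Then
\[
\left\lfloor \frac{N}{p^i}\right\rfloor=\sum_{j\ge i} d_jp^{j-i}.
\]
Summing over $i\ge1$ and collecting the coefficient of each $d_j$ yields
\[
\sum_{j\ge 1} d_j\,(p^{j-1}+\dots+1)=\sum_{j} d_j\,\frac{p^j-1}{p-1}=\frac{N-s_p(N)}{p-1}.
\]
The term $j=0$ contributes nothing, which is consistent with the second form.

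\emph{The inequality.} Since $N\ge1$, at least one base-$p$ digit of $N$ is nonzero, so $s_p(N)\ge1$. Hence
\[
\nu_p(N!)=\frac{N-s_p(N)}{p-1}\le \frac{N-1}{p-1}.
\]
The left-hand side is an integer, so taking the floor of the right-hand side preserves the inequality, giving the stated bound.

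None of these steps is a real obstacle. The only point needing care is the reindexing in the second equality, namely checking that the geometric sum $\sum_{i=1}^{j}p^{j-i}$ equals $(p^j-1)/(p-1)$.
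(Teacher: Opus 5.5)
Your proof is correct and complete. You use double counting for the first equality, the digit-by-digit reindexing for the second, and the observation $s_p(N)\ge 1$ together with integrality of $\nu_p(N!)$ for the floor bound. This is the standard argument, and the paper gives no proof of its own (it records the lemma as well known), so there is nothing further to compare.
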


\begin{lemma}\label{lem : multinomial}
    Given integers $N$ and $\lambda_1,\lambda_2,\dots \ge 0$ with $\sum_{i\ge 1}\lambda_i \le N$, we have
    \[
    \sum_{i\ge 1}\nu_p(\lambda_i!) \le \nu_p(N!)
    \]
\end{lemma}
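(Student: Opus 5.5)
The plan is to reduce the claim to the statement that a multinomial coefficient is an integer. Put $M:=\sum_{i\ge1}\lambda_i\le N$; only finitely many $\lambda_i$ are nonzero, so $M$ is finite. The multinomial coefficient
\[
\binom{M}{\lambda_1,\lambda_2,\dots}=\frac{M!}{\prod_{i}\lambda_i!}
\]
is a positive integer, since it counts the ways to distribute $M$ labelled objects into boxes of sizes $\lambda_1,\lambda_2,\dots$. Equivalently, it is a product of binomial coefficients $\binom{\lambda_1+\dots+\lambda_k}{\lambda_k}$. Taking $p$-adic valuations gives
\[
\sum_i\nu_p(\lambda_i!)\le\nu_p(M!).
\]

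It remains to compare $M!$ with $N!$. Since $M\le N$, the integer $M!$ divides $N!$, so $\nu_p(M!)\le\nu_p(N!)$. Combining the two inequalities proves the lemma.

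As a check, or an alternative route, one can use Lemma~\ref{lem : Legendre} directly. It gives
\[
\sum_i\nu_p(\lambda_i!)=\sum_{j\ge1}\sum_i\left\lfloor\frac{\lambda_i}{p^j}\right\rfloor .
\]
For each $j$, superadditivity of the floor function gives $\sum_i\lfloor\lambda_i/p^j\rfloor\le\lfloor M/p^j\rfloor\le\lfloor N/p^j\rfloor$. Summing over $j$ gives the claim again.

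There is no real obstacle here. The only points to state explicitly are that all but finitely many $\lambda_i$ vanish, so the sum is finite, and that the case $M=0$ is trivial.
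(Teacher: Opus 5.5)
Your proof is correct and is essentially the paper's argument: integrality of the multinomial coefficient $\binom{M}{\lambda_1,\lambda_2,\dots}$ with $M=\sum_i\lambda_i$, combined with the fact that $\nu_p(N!)$ is non-decreasing in $N$. Your alternative route via Legendre's formula and superadditivity of the floor function is also valid.
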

\begin{proof}
    The statement follows immediately from integrality of the multinomial coefficient $\binom{\sum_{i}\lambda_i}{\lambda_1,\lambda_2,\dots}$ and the fact that $\nu_p(N!)$ is non-decreasing in $N$.
\end{proof}

\begin{lemma}\label{lem : nu_T_eg rewrite}
    We have $\nu_p(T_{2g}) = q + \nu_p(q!)$.
\end{lemma}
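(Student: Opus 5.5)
Starting from Definition~\ref{def : Minkowski numbers} with $m=2g$, we have
\[
\nu_p(T_{2g})=\sum_{i\ge 0}\left\lfloor\frac{2g}{(p-1)p^i}\right\rfloor .
\]
We split off the $i=0$ term and identify the tail with a Legendre sum.

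The $i=0$ term is $\lfloor 2g/(p-1)\rfloor=q$ by the definition of $q$.

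For the terms with $i\ge 1$, we use the standard nested-floor identity $\lfloor \lfloor x\rfloor/n\rfloor=\lfloor x/n\rfloor$, valid for real $x$ and positive integers $n$. Taking $x=2g/(p-1)$ and $n=p^i$ gives
\[
\left\lfloor\frac{2g}{(p-1)p^i}\right\rfloor=\left\lfloor\frac{q}{p^i}\right\rfloor .
\]
To justify the identity, write $x=\lfloor x\rfloor+\theta$ with $0\le\theta<1$, and $\lfloor x\rfloor = an+b$ with $0\le b\le n-1$. Then $x/n=a+(b+\theta)/n$, and $0\le(b+\theta)/n<1$, so $\lfloor x/n\rfloor=a=\lfloor\lfloor x\rfloor/n\rfloor$.

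Summing over $i\ge 1$ and applying Lemma~\ref{lem : Legendre} to $N=q$ gives $\sum_{i\ge1}\lfloor q/p^i\rfloor=\nu_p(q!)$. If $q=0$, both sides are $0$ (with $0!=1$), so Lemma~\ref{lem : Legendre}'s hypothesis $N\ge 1$ causes no trouble. Hence $\nu_p(T_{2g})=q+\nu_p(q!)$.

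The argument is elementary. The only point needing care is the floor identity, which is used in place of the equivalent formulation via $g=qs+r$. In that formulation, $2g/(p-1)=q+r/s$ with $0\le r/s<1$, and the same identity applies. Nothing here uses $p\ge5$, so the statement holds for all primes with $q=\lfloor 2g/(p-1)\rfloor$.
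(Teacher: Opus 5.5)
Your proof is correct and follows the paper's argument exactly: you split off the $j=0$ term as $q$, convert the tail via the nested-floor identity $\lfloor y/N\rfloor=\lfloor\lfloor y\rfloor/N\rfloor$, and finish with Legendre's formula. Your explicit proof of the floor identity and your handling of the $q=0$ case fill in details that the paper leaves implicit.
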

\begin{proof}
    We have
    \[
    \nu_p(T_{2g})
    =
    \sum_{j\ge 0}\left\lfloor\frac{2g}{p^j (p-1)}\right\rfloor
    =
    q +
    \sum_{j\ge 1}\left\lfloor\frac{\frac{2g}{p-1}}{p^j}\right\rfloor
    =
    q +
    \sum_{j\ge 1}\left\lfloor\frac{q}{p^j}\right\rfloor
    =q+\nu_p(q!)
    \]
    where the second equality from the right uses the identity $\lfloor y / N\rfloor = \lfloor\lfloor y \rfloor / N\rfloor$ for $N\in\ZZ_{>0}$ and $y\in\RR$, and the rightmost equality is by Lemma~\ref{lem : Legendre}.
\end{proof}

\begin{lemma}\label{lem : lambda star divisible}
    Let $m:= q+r$ and let $\lambda^*=(\lambda_1,\lambda_2,\dots\ge 0)$ be defined by
    \[
    \lambda_1 = r,\,\,\,\,\,\,\,\,\,\,\lambda_s=q,\,\,\,\,\,\,\,\,\,\,\lambda_i=0\,\,(i\notin \{1,s\}).
    \]
    Then $\lambda^*\vdash g$, $\len(\lambda^*)=m$, and the denominator of the $\lambda^*$-summand of $A_{g,m}$ in \eqref{eqn : def Agm} is divisible by $p^{\nu_p(T_{2g})}$.
\end{lemma}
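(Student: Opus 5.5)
The first two claims are bookkeeping, so the plan is to dispatch them and spend the effort on the valuation. By definition, $\sum_i i\lambda_i = 1\cdot r + s\cdot q = qs+r = g$, so $\lambda^*\vdash g$. Likewise $\len(\lambda^*) = r+q = m$. If $r=0$ the part $\lambda_1$ vanishes; if $s=1$ the parts would merge, but $s\ge 2$ since $p\ge 5$. So no special case arises.

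Since the summand is $\pm 1/\bigl(r!\,q!\,(3!)^r\,(2s+1)!^q\bigr)$, its denominator is this integer divided by the gcd of numerator and denominator. The numerator is $\pm1$, so the denominator is exactly
\[
r!\,q!\,6^r\,(2s+1)!^q = r!\,q!\,6^r\,p!^q,
\]
using $2s+1=p$. Its $p$-adic valuation is
\[
\nu_p(r!) + \nu_p(q!) + r\,\nu_p(6) + q\,\nu_p(p!).
\]
Here $\nu_p(6)=0$ because $p\ge5$. Next, $\nu_p(p!)=1$, which follows from Lemma~\ref{lem : Legendre} or directly. Finally, $\nu_p(r!)=0$ because $r<s<p$. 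Hence the valuation equals $q+\nu_p(q!)$.

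By Lemma~\ref{lem : nu_T_eg rewrite} this equals $\nu_p(T_{2g})$, so $p^{\nu_p(T_{2g})}$ divides the denominator, in fact exactly. There is no real obstacle here. The only point needing care is that the summand is a unit fraction, so no cancellation can lower the $p$-adic valuation of its denominator.
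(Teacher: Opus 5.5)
Your proposal is correct and follows the paper's proof essentially verbatim: you compute the denominator as $r!\,q!\,(3!)^r\,p!^q$ using $2s+1=p$, discard the factors coprime to $p$ (namely $r!$ and $3!$), and use $\nu_p(p!)=1$ together with Lemma~\ref{lem : nu_T_eg rewrite} to get exactly $\nu_p(T_{2g})$. Your remark that the summand is a unit fraction, so no cancellation occurs, is a point the paper uses implicitly.
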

\begin{proof}
    First, $\lambda^*$ is well-defined since $p\ge 5$ implies $s\ge2$.
    Then $\lambda^*\vdash g$ and $\len(\lambda^*)=m$ hold by definition.
    The denominator of the $\lambda^*$-summand in \eqref{eqn : def Agm} is
    \begin{equation}\label{eqn : denom of lambda star}
    r! q! (3!)^{r}(2s+1)!^q
    =
    r! (3!)^rq! p!^q.
    \end{equation}
    Since $p\ge 5$ and $r<s<p$, we have $p\nmid 3!$ and $p\nmid r!$.
    Thus, the exponent of $p$ in \eqref{eqn : denom of lambda star} is
    \[
    \nu_p(q!) + q \nu_p(p!) = \nu_p(q!) + q = \nu_p(T_{2g}),
    \]
    as desired, where the right equation is Lemma~\ref{lem : nu_T_eg rewrite}.
\end{proof}

\begin{lemma}\label{lem : other lambda not divisible}
    Let $m:= q+r$ and let $\lambda=(\lambda_1,\lambda_2,\dots\ge 0)$ with $\lambda\vdash g$ and $\len(\lambda)=m$ be different from $\lambda^*$ in Lemma~\ref{lem : lambda star divisible}.
    Then $p^{\nu_p(T_{2g})}$ does \emph{not} divide the denominator of the $\lambda$-summand of $A_{g,m}$ in \eqref{eqn : def Agm}.
\end{lemma}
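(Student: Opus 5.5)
The plan is to bound the $p$-adic valuation of the $\lambda$-summand's denominator directly and show it is strictly smaller than $\nu_p(T_{2g})=q+\nu_p(q!)$ (Lemma~\ref{lem : nu_T_eg rewrite}). The denominator is the integer
\[
\prod_i \lambda_i!\,(2i+1)!^{\lambda_i},
\qquad\text{so its valuation is}\qquad
V:=\sum_i \nu_p(\lambda_i!)+\sum_i\lambda_i\,\nu_p((2i+1)!).
\]

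\textbf{Step 1: split by part size.} Split the indices into small ones $i<s$ and large ones $i\ge s$, and set
\[
b:=\sum_{i<s}\lambda_i,\qquad a:=\sum_{i\ge s}\lambda_i,
\]
so that $a+b=m=q+r$. For $i<s$ we have $2i+1<p$, hence $\nu_p((2i+1)!)=0$. For all $i$, Lemma~\ref{lem : Legendre} gives $\nu_p((2i+1)!)\le\lfloor 2i/(p-1)\rfloor=\lfloor i/s\rfloor$. Therefore
\[
\sum_i\lambda_i\nu_p((2i+1)!)\le Q:=\sum_{i\ge s}\lambda_i\lfloor i/s\rfloor\le \tfrac1s\sum_{i\ge s}i\lambda_i\le \tfrac{g-b}{s},
\]
where the last inequality uses $\sum_{i<s}i\lambda_i\ge b$. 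Since $\lfloor i/s\rfloor\ge 1$ for $i\ge s$, we also get $a\le Q$. Put $k:=q-\lfloor (g-b)/s\rfloor$. Then $Q\le q-k$ and $a\le q-k$.

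\textbf{Step 2: $k\ge1$ unless $\lambda=\lambda^*$.} Because $a\le q$, we have $b=m-a\ge r$.
\begin{itemize}
\item If $b=r$, then $a=q$, and $g=\sum_i i\lambda_i\ge b+sa=r+sq=g$. Equality forces every small part to equal $1$ and every large part to equal $s$, which is exactly $\lambda=\lambda^*$.
\item Hence for $\lambda\ne\lambda^*$ we have $b\ge r+1$. Then $\lfloor (g-b)/s\rfloor\le\lfloor (qs-1)/s\rfloor=q-1$, so $k\ge1$.
\end{itemize}
From the definition of $k$ we also get $b\le g-(q-k)s=ks+r<(k+1)s$.

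\textbf{Step 3: factorial terms.} By Lemma~\ref{lem : multinomial}, applied separately to the small-index parts (total $b$) and to the large-index parts (total $a$),
\[
\sum_i\nu_p(\lambda_i!)\le \nu_p(b!)+\nu_p(a!)\le \nu_p(b!)+\nu_p((q-k)!).
\]
By Lemma~\ref{lem : Legendre}, $\nu_p(b!)\le\lfloor (b-1)/(2s)\rfloor$. Since $(b-1)/(2s)<(k+1)/2\le k$ for $k\ge1$, this gives $\nu_p(b!)\le k-1$.

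\textbf{Step 4: conclusion.} Combining the three steps,
\[
V\le (q-k)+(k-1)+\nu_p((q-k)!)\le q-1+\nu_p(q!)<\nu_p(T_{2g}),
\]
so $p^{\nu_p(T_{2g})}$ does not divide the denominator, as claimed.

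The step needing the most care is Step 2: the naive bound $Q\le q$ is not strict. Strictness has to come from the surplus $b-r$ of small parts, and Step 3 must show that this surplus cannot be repaid through $\nu_p(b!)$. The estimate $b<(k+1)s$ is the key input there. If the bookkeeping in Step 1 turns out to be off by one somewhere, I would refine $Q\le (g-\sum_{i<s}i\lambda_i)/s$ rather than using the cruder $\sum_{i<s}i\lambda_i\ge b$.
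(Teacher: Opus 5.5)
Your proof is correct and follows essentially the same route as the paper. Both arguments split the indices at $i=s$, bound $\nu_p((2i+1)!)$ by $\lfloor i/s\rfloor$, and apply Lemma~\ref{lem : multinomial} separately to the small and large indices. Both then show a deficit parameter is at least $1$ unless $\lambda=\lambda^*$, and finish with $\nu_p(N!)\le\lfloor (N-1)/(p-1)\rfloor$ on the small indices.

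The difference is only bookkeeping. You measure the deficit by $k=q-\lfloor (g-b)/s\rfloor$, driven by the count of small parts, and prove $k\ge 1$ directly. The paper uses $t=q-f$ with $f=\sum_i\lambda_i\lfloor i/s\rfloor$ and proves $t\ge 1$ by contradiction. Your closing worry about an off-by-one error is unfounded: every inequality checks out as written.
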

\begin{proof}
    The exponent of $p$ in the denominator of the $\lambda$-summand of $A_{g,m}$ in \eqref{eqn : def Agm} is
    \[
    D(\lambda):=\underbrace{\sum_{i\ge 1} \nu_p(\lambda_i!)}_{(\ast)} + \underbrace{\sum_{i\ge 1}\lambda_i \nu_p((2i+1)!)}_{(\ast\ast)}.
    \]
    We will show that $D(\lambda)< \nu_p(T_{2g})$.
    Define
    \[
    f:=\sum_{i\ge 1}\lambda_i\left\lfloor\frac{i}{s}\right\rfloor
    \,\,\,\,\,,\,\,\,\,\,
    t := q - f
    \]
    and note that $t\ge 0$ since
    \[
    f \le \frac{\sum_{i\ge 1}i\lambda_i}{s} = \frac{g}{s} = \frac{2g}{p-1}
    \]
    and $f\in\ZZ$, so
    \begin{equation}\label{eqn : f le q}
    f\le \left\lfloor \frac{2g}{p-1} \right\rfloor = q.
    \end{equation}
    For $(\ast\ast)$, Lemma~\ref{lem : Legendre} gives
    \[
    \nu_p((2i+1)!) \le \left\lfloor\frac{2i}{p-1}\right\rfloor = \left\lfloor\frac{i}{s}\right\rfloor
    \]
    so
    \begin{equation}\label{eqn : nu_p factorials}
        \sum_{i\ge 1} \lambda_i \nu_p((2i+1)!)\le f.
    \end{equation}
    For $(\ast)$, we split the sum into \emph{large} indices $i\ge s$ and \emph{small} indices $i<s$.
    For large indices, we have
    \begin{equation}\label{eqn : g ge s ineq}
    N_{\ge s} := \sum_{i\ge s}\lambda_i \le \sum_{i\ge 1}\lambda_i\left\lfloor\frac{i}{s}\right\rfloor = f
    \end{equation}
    and hence, by Lemma~\ref{lem : multinomial},
    \begin{equation}\label{eqn : nu_p large indices}
        \sum_{i\ge s}\nu_p(\lambda_i!) \le \nu_p(f!).
    \end{equation}
    
    For small indices, we use $f\le (\sum_{i\ge s}i\lambda_i)/s$ to get
    \[
    \sum_{i<s}i\lambda_i = g - \sum_{i\ge s}i\lambda_i \le g - sf
    = r + s(q-f) = r+st
    \]
    and hence
    \begin{equation}\label{eqn : N < s ineq}
    N_{<s} := \sum_{i < s}\lambda_i \le \sum_{i<s} i\lambda_i \le r+st.
    \end{equation}
    Thus, Lemma~\ref{lem : multinomial} gives
    \begin{equation}\label{eqn : nu_p small indices}
        \sum_{i<s}\nu_p(\lambda_i!) \le \nu_p((r+st)!).
    \end{equation}

    \textit{Claim: We have $t\ge 1$.}
    
    \textit{Proof of Claim.}
    Assume for contradiction that $t=0$.
    Then $f=q$ and \eqref{eqn : g ge s ineq}, \eqref{eqn : N < s ineq} give $N_{\ge s} \le q$ and $N_{<s} \le r$.
    But since
    \[
    N_{<s} = m -N_{\ge s}\ge m-q = r,
    \]
    we actually have $N_{<s}=r$ and hence $N_{\ge s}=q$.
    The former gives equality in \eqref{eqn : N < s ineq} which forces $\lambda_i=0$ for $1<i<s$ and hence $\lambda_1=r$.
    Thus, $\sum_{i\ge s}i \lambda_i=qs$, which together with $N_{\ge s}=q$ implies $\lambda_s=q$ and $\lambda_i=0$ for $i>s$.
    Thus, $\lambda=\lambda^*$, contradicting our assumption.
    We therefore have $t\ge 1$, proving the claim.

    \textit{Conclusion.}
    Since $r<s$ and $t\ge 1$, we have
    \[
    r + st < s + st \le 2st = (p-1)t
    \]
    so Lemma~\ref{lem : Legendre} gives
    \begin{equation}\label{lem : nu_p r+st}
        \nu_p((r+st)!)\le \left\lfloor\frac{(p-1)t-1}{p-1}\right\rfloor = t-1.
    \end{equation}
    Combining this with \eqref{eqn : nu_p factorials}, \eqref{eqn : nu_p large indices}, \eqref{eqn : nu_p small indices} gives
    \begin{align*}
    D(\lambda) &\le f + \nu_p(f!) + (t-1) 
    \\ &= q + \nu_p(f!) - 1 
    \\&\le q + \nu_p(q!) - 1
    \\ &= \nu_p(T_{2g})-1
    \end{align*}
    as desired, where the bottom inequality uses $f\le q$ from \eqref{eqn : f le q} and the final equality uses Lemma~\ref{lem : nu_T_eg rewrite}.
    This finishes the proof of the lemma.
\end{proof}

\begin{corollary}\label{cor : some Agm has divisible denominator}
    Let $m:= q+r$. Then $p^{\nu_p(T_{2g})}$ divides the denominator of $A_{g,m}$ in \eqref{eqn : def Agm}.
\end{corollary}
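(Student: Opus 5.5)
We combine Lemma~\ref{lem : lambda star divisible} and Lemma~\ref{lem : other lambda not divisible} through the ultrametric property of the $p$-adic valuation. Write $e:=\nu_p(T_{2g})$ and express $A_{g,m}$, for $m=q+r$, as a signed sum of terms $1/d_\lambda$ over partitions $\lambda\vdash g$ with $\len(\lambda)=m$. Here
\[
d_\lambda=\prod_i \lambda_i!(2i+1)!^{\lambda_i}
\]
is a positive integer, and the global sign $(-1)^{g+m}$ does not affect valuations. Each summand has numerator $1$, so its $p$-adic valuation is exactly $-\nu_p(d_\lambda)$.

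Lemma~\ref{lem : lambda star divisible} gives $\nu_p(d_{\lambda^*})\ge e$. On the other hand, the proof of Lemma~\ref{lem : other lambda not divisible} shows $D(\lambda)=\nu_p(d_\lambda)\le e-1$ for every other admissible $\lambda$. The proof of Lemma~\ref{lem : lambda star divisible} in fact computes $\nu_p(d_{\lambda^*})=e$ exactly, but only the inequality $\ge e$ is needed.

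The $\lambda^*$-summand therefore has valuation at most $-e$, while every other summand has valuation at least $-(e-1)>-e$. By the strict ultrametric inequality, the sum of the other terms has valuation at least $-(e-1)$. Adding the $\lambda^*$-term, whose valuation is strictly smaller, gives
\[
\nu_p(A_{g,m})=\nu_p\!\left(1/d_{\lambda^*}\right)\le -e.
\]
Hence $p^{e}$ divides $\denom(A_{g,m})$.

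The argument is essentially bookkeeping. The only point needing care is that Lemma~\ref{lem : other lambda not divisible} is phrased as ``does not divide''. Since each summand is $1/d_\lambda$ in lowest terms, this is equivalent to $\nu_p(d_\lambda)<e$, which is the strict inequality we use. The degenerate case $g=0$ is not an exception: then $q=r=m=0$, the empty partition is $\lambda^*$, and $e=0$, so the claim is trivial.
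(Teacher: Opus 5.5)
Your proposal is correct and is essentially the paper's argument: the paper likewise combines Lemma~\ref{lem : lambda star divisible} and Lemma~\ref{lem : other lambda not divisible} to single out $\lambda^*$ as the unique summand of minimal $p$-adic valuation, and then applies the strict ultrametric inequality, using that all summands are non-zero. Your translation of ``does not divide'' into $\nu_p(d_\lambda)<e$ and your treatment of the degenerate case are accurate; the paper leaves these details implicit.
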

\begin{proof}
    By Lemma~\ref{lem : lambda star divisible} and Lemma~\ref{lem : other lambda not divisible}, $\lambda^*$ is the only partition whose summand has denominator divisible by $p^{\nu_p(T_{2g})}$.
    The statement now follows from the inequality $\nu_p(a+b)\ge \min\{\nu_p(a),\nu_p(b)\}$ for $a,b\in\QQ\setminus\{0\}$, which is an equality if $\nu_p(a)\neq \nu_p(b)$.
    Here we used that all summands in \eqref{eqn : def Agm} are non-zero.
\end{proof}

\begin{proposition}\label{prop : t over sin t}
    Let $p\ge 5$ be prime and let $R_{2g}(x)\in\QQ[x]$ be defined by
    \[
    \left(\frac{t}{\sin t}\right)^x = \sum_{g\ge 0} R_{2g}(x)t^{2g}.
    \]
    Then for each $g\in\ZZ_{\ge 0}$ there exists a coefficient $c\neq 0$ of $T_{2g}R_{2g}(x)$ with $\nu_p(c)\le 0$.
\end{proposition}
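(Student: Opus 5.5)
We connect $R_{2g}(x)$ to the numbers $A_{g,m}$ of \eqref{eqn : def Agm} and then apply Corollary~\ref{cor : some Agm has divisible denominator}. Write
\[
\frac{\sin t}{t} = 1 + u,\qquad u := \sum_{i\ge 1}\frac{(-1)^i t^{2i}}{(2i+1)!}.
\]
Then $\left(\frac{t}{\sin t}\right)^x = (1+u)^{-x} = \sum_{m\ge 0}\binom{-x}{m}u^m$. By the multinomial expansion,
\[
u^m = \sum_{\lambda,\ \len(\lambda)=m} m!\prod_i \frac{1}{\lambda_i!}\left(\frac{(-1)^i}{(2i+1)!}\right)^{\lambda_i} t^{2|\lambda|}.
\]
Hence the coefficient of $t^{2g}$ in $u^m$ equals $(-1)^{g+m}\, m!\, A_{g,m}$; the sign works out because $\prod_i(-1)^{i\lambda_i}=(-1)^g$. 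Since $\binom{-x}{m}=(-1)^m x(x+1)\cdots(x+m-1)/m!$, we get
\[
R_{2g}(x) = \sum_{m=0}^{g} (-1)^{g} A_{g,m}\, x(x+1)\cdots(x+m-1).
\]
This gives an expansion of $R_{2g}$ in the rising-factorial basis with coefficients $\pm A_{g,m}$.

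Next we pass from this basis to the monomial basis while controlling $p$-adic valuations. The change of basis between $\{x^{(m)}:=x(x+1)\cdots(x+m-1)\}$ and $\{x^m\}$ is unitriangular with integer entries (unsigned Stirling numbers of the first kind), and its inverse is also integral. Write $R_{2g}(x)=\sum_k c_k x^k$. Then $(-1)^gA_{g,m}=\sum_k c_k\,\cdot(\text{integer})$, where the integers are the entries of the inverse matrix. Consequently $\min_m \nu_p(A_{g,m}) \ge \min_k \nu_p(c_k)$.

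Set $m=q+r$. Corollary~\ref{cor : some Agm has divisible denominator} gives $\nu_p(A_{g,m})\le -\nu_p(T_{2g})$. Therefore some coefficient $c_k$ of $R_{2g}$ satisfies $\nu_p(c_k)\le-\nu_p(T_{2g})$. In particular $c_k\ne0$, and the corresponding coefficient $c=T_{2g}c_k$ of $T_{2g}R_{2g}(x)$ satisfies $\nu_p(c)\le 0$.

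The case $g=0$ is trivial, since $R_0=1$ and $T_0=1$. The main step requiring care is the bookkeeping of the sign and the factor $m!$ in the coefficient of $t^{2g}$ in $u^m$. In particular, one must confirm that the $m!$ from the multinomial expansion exactly cancels the $1/m!$ in $\binom{-x}{m}$. If it did not, a stray factor $m!$ could shift the valuation and the argument would need adjusting. Everything else is routine.
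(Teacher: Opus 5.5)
Your proof is correct and follows the paper's outline. You expand $(1+u)^{-x}$ in rising factorials $x^{\overline m}$ with coefficients $A_{g,m}$, then apply Corollary~\ref{cor : some Agm has divisible denominator} at $m=q+r$.

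The difference is the final step, passing from the basis $\{x^{\overline m}\}$ to monomials. You use that the inverse change of basis is also integral: $x^k=\sum_m(-1)^{k-m}S(k,m)\,x^{\overline m}$, with $S(k,m)$ the Stirling numbers of the second kind. The ultrametric inequality then gives $\min_k\nu_p(c_k)\le\min_m\nu_p(A_{g,m})$. Equality in fact holds, since the forward matrix is integral too, so the minimal $p$-adic valuation of the coefficients is the same in both bases.

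The paper uses only the forward unitriangular matrix. It takes the maximal $p$-exponent $e_g$ among the denominators of the $A_{g,m}$ and the largest index $m_1$ attaining it. Triangularity then shows that $p^{e_g}$ divides the denominator of the coefficient of $x^{m_1}$. Your version is slightly cleaner and basis-invariant. The paper's pins down an explicit monomial, at the cost of a separate case $e_g=0$.

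Two small corrections:
\begin{enumerate}
\item Since $A_{g,m}$ already carries the sign $(-1)^{g+m}$, the coefficient of $t^{2g}$ in $u^m$ is $(-1)^m m!\,A_{g,m}$. Hence $R_{2g}(x)=\sum_m A_{g,m}x^{\overline m}$ with no factor $(-1)^g$; for example $R_2(x)=x/6=A_{1,1}x$. The $m!$ does cancel as you hoped, and the sign is irrelevant for valuations.
\item When $\nu_p(T_{2g})=0$, the Corollary only asserts $1\mid\denom(A_{g,q+r})$, which does not give $\nu_p(A_{g,q+r})\le 0$. The fix is immediate: in that case $q=0$, so $m=r=g$, and $A_{g,g}=1/(g!\,6^g)\neq 0$ has $\nu_p\le 0$. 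When $\nu_p(T_{2g})\ge 1$, divisibility of the denominator by $p$ does force $A_{g,m}\neq 0$ and $\nu_p(A_{g,m})\le-\nu_p(T_{2g})$, exactly as you use.
\end{enumerate}
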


\begin{proof}
    For $m\ge 0$, write $x^{\overline{m}}:=x(x+1)\cdots(x+m-1)$ and recall the variant
    \[
    (1-z)^{-x} = \sum_{m\ge 0} \frac{x^{\overline{m}}}{m!}z^m
    \]
    of the generalised binomial theorem.
    Applying this to
    \[
    \left(\frac{t}{\sin t}\right)^{x}
    =
    \left(\frac{\sin t}{t}\right)^{-x}
    =
    \left(1 - \sum_{i\ge 1}(-1)^{i+1}\frac{t^{2i}}{(2i+1)!}\right)^{-x}
    \]
    we obtain
    \begin{equation}\label{eqn : Agm sum}
    R_{2g}(x) = \sum_{m=0}^g A_{g,m}x^{\overline{m}}
    \end{equation}
    with $A_{g,m}$ as defined in \eqref{eqn : def Agm}.
    Let
    \[
    e_g:=\max\left\{e\in\ZZ_{\ge 0} : p^e \mid \denom(A_{g,m})\,\,\text{for some}\,\,m\in\{0,\dots,g\}\right\}
    \]
    and
    \[
    m_1 := \max\left\{m\in\{0,\dots,g\} : p^{e_g} \mid \denom(A_{g,m})\right\}.
    \]

    Let $c'\in\QQ$ be the coefficient of $x^{m_1}$ in $R_{2g}(x)$.
    By definition of $e_g$ and $m_1$, and since $x^{\overline{m_1}}$ is monic, we have $p^{e_g}\mid \denom(c')$.
    In particular, $c'\neq 0$ whenever $e_g>0$.
    For $e_g=0$, we similarly obtain $m_1=g$ and hence $c'=A_{g,g}=1/g!(3!)^g\neq 0$.
    By Corollary~\ref{cor : some Agm has divisible denominator}, we have $0\le \nu_p(T_{2g})\le e_g$, so $c:=T_{2g}c'$ is the desired coefficient of $T_{2g}R_{2g}(x)$.

\end{proof}

\subsection{Faber--Pandharipande's Hodge integrals}
\label{sec : FP}

Recall from \cite[Theorem~2]{MR1728879} that
\begin{equation}\label{eqn : FP}
    \left(\frac{t/2}{\sin(t/2)}\right)^{x+1} 
    =
    1 + \sum_{g=1}^\infty t^{2g} \sum_{i=0}^g x^i\int_{\M_{g,1}}\psi_1^{2g-2+i}\lambda_{g-i}.
\end{equation}
Thus, writing
\begin{equation}\label{eqn : def Qn}
\left(\frac{t/2}{\sin(t/2)}\right)^x = \sum_{g \ge 0}Q_{2g}(x)t^{2g}
\end{equation}
with $Q_{2g}(x)\in\QQ[x]$ as in the introduction, it suffices to prove the following.

\begin{proposition}\label{prop : T_2g Q_2g primitive}
    For any $g\in \ZZ_{\ge 0}$, the polynomial $T_{2g} Q_{2g}(x)$ has integer coefficients and is primitive.
\end{proposition}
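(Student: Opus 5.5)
We prove integrality and primitivity separately, one prime at a time.

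\emph{Integrality} follows from the upper bound of Section~\ref{sec : upper bound}. By \eqref{eqn : FP} and \eqref{eqn : def Qn}, $Q_{2g}(x+1)=\sum_{i=0}^g x^i\int_{\M_{g,1}}\psi_1^{2g-2+i}\lambda_{g-i}$ for $g\ge1$, so each coefficient of $Q_{2g}(x+1)$ is a genus $g$ Hodge integral. By Corollary~\ref{cor : upper bound}, $T_{2g}Q_{2g}(x+1)\in\ZZ[x]$. Substituting $x\mapsto x-1$ preserves both integrality and primitivity, since it is an automorphism of $\ZZ[x]$. Hence $T_{2g}Q_{2g}(x)\in\ZZ[x]$. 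The case $g=0$ is trivial because $Q_0=1=T_0$.

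\emph{Primitivity} requires that for each prime $p$ some coefficient of $T_{2g}Q_{2g}(x)$ is not divisible by $p$. This is equivalent to the $p$-part of the least common denominator of the coefficients of $Q_{2g}$ being exactly $p^{\nu_p(T_{2g})}$.

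For $p\ge5$, substitute $t\mapsto t/2$ in Proposition~\ref{prop : t over sin t}. This gives $Q_{2g}(x)=2^{-2g}R_{2g}(x)$. Since $2$ is a $p$-adic unit, Proposition~\ref{prop : t over sin t} supplies a nonzero coefficient $c$ of $T_{2g}Q_{2g}$ with $\nu_p(c)\le0$. Integrality forces $\nu_p(c)=0$, so $p\nmid c$.

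For $p\in\{2,3\}$, use the leading coefficient. Expanding $(t/2)/\sin(t/2)=1+t^2/24+O(t^4)$, the coefficient of $x^g$ in $Q_{2g}(x)$ is $1/(24^g g!)$. This is the denominator referred to in \eqref{eqn : 24^g g!}.

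\begin{itemize}
\item At $p=2$: the $2$-adic valuation of $24^g g!$ is $3g+\nu_2(g!)$. From Definition~\ref{def : Minkowski numbers}, $\nu_2(T_{2g})=\sum_{i\ge0}\lfloor 2g/2^i\rfloor=2g+g+\nu_2(g!)$, which matches.
\item At $p=3$: the $3$-adic valuation of $24^g g!$ is $g+\nu_3(g!)$. Also $\nu_3(T_{2g})=\sum_i\lfloor 2g/(2\cdot3^i)\rfloor=g+\nu_3(g!)$, which matches.
\end{itemize}

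So for $p\in\{2,3\}$ the coefficient $T_{2g}/(24^g g!)$ is a $p$-adic unit. Since every prime is now covered, $T_{2g}Q_{2g}(x)$ is primitive.

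The main obstacle is the $p\ge5$ case, but Proposition~\ref{prop : t over sin t} already handles it. The remaining work is bookkeeping: checking that the shift $x\mapsto x+1$ and the rescaling $t\mapsto t/2$ do not disturb $p$-adic valuations. For the rescaling, the factor $2^{-2g}$ only matters at $p=2$, which is handled directly.
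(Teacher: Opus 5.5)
Your proof is correct and follows the paper's argument essentially step for step. Integrality comes from $\D_g\mid T_{2g}$ via the Faber--Pandharipande formula and shift-invariance of content; for $p\in\{2,3\}$ you use the leading coefficient $1/(24^g g!)$; and for $p\ge 5$ you apply Proposition~\ref{prop : t over sin t} after the rescaling $t\mapsto t/2$. The only minor difference is that you read off the leading coefficient directly from $(t/2)/\sin(t/2)=1+t^2/24+O(t^4)$ instead of citing the Witten--Kontsevich value of $\int_{\M_{g,1}}\psi_1^{3g-2}$, which is a valid and slightly more self-contained shortcut.
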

\begin{proof}
    The case $g=0$ is immediate, so assume $g\ge 1$.

    First, note that the shift $x\mapsto x+1$ does not change the content of a polynomial.
    So by equations \eqref{eqn : FP}, \eqref{eqn : def Qn} and $\D_g\mid T_{2g}$ (Corollary~\ref{cor : upper bound}), $T_{2g}Q_{2g}(x)$ has integer coefficients.
    Thus, it suffices to find for every prime $p$ a coefficient of $T_{2g}Q_{2g}(x)$ that is not a multiple of $p$.

    For $p\in\{2,3\}$, the leading coefficient works.
    Indeed, by \cite[after~(2.26)]{MR1144529}, the Witten conjecture \cite[Theorem~1.2]{MR1171758} implies we have the following (see also \cite[(28)]{MR1728879}):
    \begin{equation}\label{eqn : 24^g g!}
    \int_{\M_{g,1}}\psi_1^{3g-2} = \frac{1}{24^g g!}.
    \end{equation}
    In particular,
    \begin{equation}\label{eqn : v2 T2n}
    \nu_2(T_{2g}) = \sum_{i\ge 0}\left\lfloor \frac{2g}{2^i} \right\rfloor
    =
    2g + g + \sum_{i\ge 1}\left\lfloor\frac{g}{2^i}\right\rfloor 
    =3g+\nu_2(g!)
    =
    \nu_2(24^g g!)
    \end{equation}
    and
    \begin{equation}\label{eqn : v3 T2n}
    \nu_3(T_{2g}) = \sum_{i\ge 0}\left\lfloor\frac{2g}{2\cdot 3^i}\right\rfloor
    = g + \sum_{i\ge 1}\left\lfloor\frac{g}{3^i}\right\rfloor
    =
    g+\nu_3(g!)
    =
    \nu_3(24^g g!)
    \end{equation}
    so the coefficient of $x^g$ in $T_{2g}Q_{2g}(x)$ is coprime to 2 and 3.

    Finally, the remaining case $p\ge 5$ was treated in Proposition~\ref{prop : t over sin t} above.
    For notational convenience, we substituted $t/2$ by $t$ in that statement.
    This is valid because we have already dealt with the exponents of $p=2$ above, and the exponents of all other primes remain unchanged.
\end{proof}

\subsection{Conclusion}
\label{sec : conclusion}

\begin{corollary}\label{cor : T_2g divides D_g}
    For any $g\ge 0$, we have $T_{2g}\mid \D_g$.
\end{corollary}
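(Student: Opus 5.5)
The plan is to deduce the divisibility from Proposition~\ref{prop : T_2g Q_2g primitive} together with the Faber--Pandharipande identity \eqref{eqn : FP}. The case $g=0$ is trivial since $T_0=1$, so assume $g\ge 1$. Comparing \eqref{eqn : FP} with \eqref{eqn : def Qn}, I would first record that the coefficient of $t^{2g}$ satisfies
\[
Q_{2g}(x+1)=\sum_{i=0}^g x^i\int_{\M_{g,1}}\psi_1^{2g-2+i}\lambda_{g-i}.
\]
Each coefficient of $Q_{2g}(x+1)$ is therefore a genus $g$ Hodge integral, with $n=1$ and $2g-2+1>0$, and so each has denominator dividing $\D_g$.

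Next I would show that $\D_g\, Q_{2g}(x+1)\in\ZZ[x]$, and hence, after substituting $x\mapsto x-1$, that $\D_g Q_{2g}(x)\in\ZZ[x]$; integrality is preserved under the unimodular shift. By Proposition~\ref{prop : T_2g Q_2g primitive}, $T_{2g}Q_{2g}(x)$ is a primitive integer polynomial. Write $\D_g Q_{2g}(x) = \frac{\D_g}{T_{2g}}\cdot T_{2g}Q_{2g}(x)$. If a prime power $p^e$ exactly divided $T_{2g}/\gcd(T_{2g},\D_g)$ with $e>0$, then every coefficient of $T_{2g}Q_{2g}(x)$ would be divisible by $p$, because the product is integral. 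This contradicts primitivity, so $T_{2g}\mid \D_g$.

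Equivalently, the content of $Q_{2g}$ is exactly $1/T_{2g}$, so the least common denominator of its coefficients is $T_{2g}$. Shifting preserves content, and the coefficients of $Q_{2g}(x+1)$ are themselves Hodge integrals, so $T_{2g}$ divides $\D_g$. There is essentially no obstacle here: all the substance lies in Proposition~\ref{prop : T_2g Q_2g primitive}. The only point to state carefully is that the shift $x\mapsto x\pm1$ is an automorphism of $\ZZ[x]$, which is what justifies transferring content and integrality between $Q_{2g}(x)$ and $Q_{2g}(x+1)$.
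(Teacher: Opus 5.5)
Your proposal is correct and follows the paper's own argument: you read off $Q_{2g}(x+1)$ from \eqref{eqn : FP} and \eqref{eqn : def Qn} as a polynomial whose coefficients are genus $g$ Hodge integrals, then combine the shift-invariance of content with the primitivity of $T_{2g}Q_{2g}(x)$ from Proposition~\ref{prop : T_2g Q_2g primitive}. Your prime-by-prime argument simply spells out the paper's one-line deduction in more detail.
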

\begin{proof}
    This is an immediate consequence of equations \eqref{eqn : FP}, \eqref{eqn : def Qn} and Proposition~\ref{prop : T_2g Q_2g primitive}, noting that the shift $x\mapsto x+1$ does not change the content of a polynomial in $\QQ[x]$.
\end{proof}

\section{Proof of the automorphism corollary}\label{sec : cor proofs}

Finally, let us present the proof of Corollary~\ref{cor: aut orders divide Dg}, which we restate here for convenience.
Whenever $X$ is an algebraic group over a field $k$, we write $\Aut_{k\text{-gp}}(X)$ for the group of algebraic group automorphisms of $X$ over $k$.

\begin{theorem}\label{thm : cor aut}
    For $0\le g'\le g$ and an algebraically closed field $k$ of arbitrary characteristic, the order of the automorphism group of any pointed stable curve over $k$ of genus $g'$ divides $\D_g$.
\end{theorem}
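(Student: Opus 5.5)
By Theorem~\ref{thm : main thm} we have $\D_g=T_{2g}$. Since $\nu_p(T_m)$ is non-decreasing in $m$, it suffices to show that $|\Aut(C)|$ divides $T_{2g'}$ for every pointed stable curve $C$ over $k$ of genus $g'$. By property \ref{item : minkowski prop 1}, the natural route is to embed $G:=\Aut(C)$ into a finite group whose order divides $T_{2g'}$, or better, to realise $G$ faithfully in $\GL_{2g'}(\QQ)$ or in some group governed by the same divisibility. I would argue prime by prime. For each prime $\ell$, the goal is $\nu_\ell(|G|)\le \nu_\ell(T_{2g'})$, and this will follow from constructing a faithful representation of a Sylow $\ell$-subgroup $G_\ell$ on a free $\ZZ_\ell$-module of rank $2g'$ (or a rational structure on it), together with the $\ell$-local version of Minkowski's bound.

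The key geometric input is the generalised Jacobian $\Pic^0_{C/k}$, which I take with marked points forgotten after stabilising. It is an extension of an abelian variety $A$ of dimension $a$ by a torus of dimension $b$ (the first Betti number of the dual graph) and a unipotent part. Here $a+b\le g'$, with equality for the semiabelian part when $C$ is nodal, so that $2a+b\le 2g'$. The group $G$ acts on $\Pic^0$ by algebraic group automorphisms, on the dual graph, and on the normalisations of components.

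The natural representation is not faithful in general, because automorphisms may act trivially on $\Pic^0$, for example on rational components. So I would decompose $G$ through its action on the set of components and special points. The kernel of the action on the dual graph is a product over components of automorphism groups of the normalised pointed components. Rational components with at least three special points contribute subgroups of permutations of those points. Positive-genus components $C_v$ of genus $g_v$ act faithfully on $T_\ell J(C_v)$ (Serre's lemma, valid for $\ell\ne\operatorname{char}k$ and $g_v\ge 1$). Elliptic components with a point are the standard exceptional case. I would then bound everything by induction using multiplicativity: $\nu_\ell(T_{m_1})+\nu_\ell(T_{m_2})\le\nu_\ell(T_{m_1+m_2})$, and wreath products $S_k\wr H$ with $H\le \GL_m$ embed into $\GL_{km}$, so that $T_m^k\,k!$ divides $T_{km}$. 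The graph-symmetry part acts on $H_1$ of the graph, which has rank $b$.

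The main obstacle is giving a uniform accounting so that rational tails, pointed rational bridges, and permutations of genus-$0$ trees fit into the budget $2g'$. These contribute automorphisms without contributing genus, so a pure Jacobian argument fails. The paper's remark that Liu--Xu's statement fails for stable curves with $\mathcal{D}_g$ signals exactly this problem. What I would try first is to use the extra slack $\nu_\ell(T_{2g'})$ versus $\nu_\ell(T_{2g'-1})$ together with the markings: $G$ fixes every marked point, and stability forces each genus-$0$ component to have at least three special points. A tree of such components has an automorphism group built from symmetric groups on at most $2g'+n-2$ leaves, but fixing the marked points cuts this down. In characteristic $\ell=p$, I would replace Tate modules by crystalline or $\ell'$-adic ones for a different prime, and argue that Minkowski's bound is independent of $\ell'$. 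If the bookkeeping becomes too heavy, a fallback is to deform $C$ to characteristic $0$ with its automorphism group and then run the argument of Proposition~\ref{prop : geometry prop}. That argument shows that $|\Aut(C)|$ divides $|\Sp_{2g}(\ZZ/N\ZZ)|$ whenever $\M_{g,n}(N)\to\M_{g,n}$ has a fibre with free action. Via Lemma~\ref{lem : gcd sp}, varying $N$ then gives division into $T_{2g}$.
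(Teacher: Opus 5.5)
Your reduction is the same as the paper's: by $\D_g=T_{2g}$ and $T_{2g'}\mid T_{2g}$, it suffices to show that $|\Aut_k(C,p_1,\dots,p_n)|$ divides $T_{2g'}$. The gap is the premise that drives the rest of your plan. You assert that the action on the generalised Jacobian is not faithful, and that rational tails, bridges and trees contribute automorphisms invisible to $\Pic^0(C)$. For pointed stable curves this is false, and its negation is exactly the key input of the paper's proof: $\Aut_k(C,p_1,\dots,p_n)\to\Aut_{k\text{-gp}}(\Pic^0(C))$ is injective. For unpointed curves of genus $\ge 2$ this is Deligne--Mumford's Theorem~(1.13). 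The pointed case follows by gluing a smooth genus-$2$ curve to each $p_i$ and extending automorphisms by the identity on these tails. Heuristically, an automorphism of a rational component fixing its $\ge 3$ special points is trivial. A nontrivial permutation of special points is detected on $H_1(\Gamma)$, or permutes positive-genus pieces (hence factors of $\Pic^0$), or would move a labelled marked point. In particular, genus-$0$ pointed stable curves have no automorphisms. So the ``main obstacle'' you describe does not exist. The Liu--Xu remark only says that $\mathcal{D}_g$, built from $\psi$-classes alone, can be too small; it is not evidence of automorphisms that $\Pic^0$ cannot see.

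Once faithfulness is available, what remains is Lemma~\ref{lem : auts of semi-abelian}, applied to $0\to T\to\Pic^0(C)\to E\to 0$ with $\dim T+\dim E=g'$. The torus $T$ is preserved by all group automorphisms (Chevalley). The map $A\to A_T\times A_E$ is injective because every homomorphism $E\to T$ is trivial. Minkowski's bound applies to $A_T\subset\GL_{\dim T}(\ZZ)$, and Rössler--Schröer bounds $A_E$ in any characteristic.

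Your component-by-component alternative is never carried out, and as stated it has a concrete flaw. You charge graph symmetries to $H_1(\Gamma)$, but the image of $G$ in $\Aut(\Gamma)$ need not act faithfully there. Take a rational component carrying a marked point and two isomorphic elliptic tails: swapping the tails is a nontrivial graph automorphism, while $H_1(\Gamma)=0$. Such permutations must be absorbed by the abelian part, which the injection $A\hookrightarrow A_T\times A_E$ does automatically.

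The fallback fails too, for three reasons:
\begin{itemize}
\item Curves in characteristic $p$ need not lift to characteristic $0$ together with their automorphism group; Hermitian curves violate the Hurwitz bound.
\item Proposition~\ref{prop : geometry prop} bounds denominators of integrals, not $|\Aut(C)|$. Getting $|\Aut(C)|\mid|\Sp_{2g}(\ZZ/N\ZZ)|$ from level structures would need $\Aut(C)$ to act freely on them, i.e.\ faithfully on $\Pic^0(C)[N]$. That is precisely the Deligne--Mumford faithfulness you set aside.
\item For nodal $C$, the fibre of the normalised cover is not an $\Sp_{2g}(\ZZ/N\ZZ)$-torsor, since the cover is ramified over the boundary.
\end{itemize}
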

\begin{proof}
    Let $(C,p_1,\dots,p_n)$ be a stable curve over $k$ of genus $g'$
    and let $\Gamma$ be its dual graph.

Let $\Pic^0(C)$ denote the generalised Jacobian of $C$, that is, the group of invertible sheaves of degree zero on every irreducible component of $C$.
    By \cite[Theorem~(1.13)]{MR262240},
    \[
    A:=\Aut_k(C,p_1,\dots,p_n)\longrightarrow \Aut_{k\text{-gp}}(\Pic^0(C))
    \]
    is injective. (In loc.\ cit., the unpointed case with $g'\ge 2$ is proved, while the general statement can be deduced by attaching a smooth genus 2 curve to each $p_i$ and extending any automorphism of $(C,p_1,\dots,p_n)$ trivially to those components.)
    Recall that the normalisation exact sequence for $C$ gives a short exact sequence
    \begin{equation*}
    0\longrightarrow \GG_m^{b_1(\Gamma)}
    \longrightarrow \Pic^0(C)
    \longrightarrow
    \prod_{v\in V(\Gamma)}\Pic^0(\widetilde{C_v})
    \longrightarrow 0,
    \end{equation*}
    where $\widetilde{C_v}$ is the normalisation of the component $C_v\subset C$ corresponding to the vertex $v\in V(\Gamma)$.
    Hence, $\Pic^0(C)$ is a semiabelian variety of dimension $b_1(\Gamma)+\sum_{v\in V(\Gamma)}g(\widetilde{C_v}) = g'$.
    Lemma~\ref{lem : auts of semi-abelian} below therefore implies $|A| \mid T_{2g'}$.
    Since $g'\le g$, Definition~\ref{def : Minkowski numbers} immediately gives $T_{2g'}\mid T_{2g}$.
    Since $\D_g=T_{2g}$ by Theorem~\ref{thm : main thm}, we obtain $|A|\mid \D_g$, as desired.
\end{proof}

It remains to provide the following lemma, which should be well-known.
Similar arguments may be found in \cite[Lemme~3.5.1]{MR801921} and, in a slightly different context, \cite[\S5--6]{SilverbergZarhin1998}.

\begin{lemma}\label{lem : auts of semi-abelian}
    Let $k$ be an algebraically closed field of arbitrary characteristic and let $S$ be a semiabelian variety over $k$, that is, $S$ is a commutative group variety over $k$ and there exists a short exact sequence
    \begin{equation}\label{eqn : semi-abelian SES}
    0\longrightarrow T\longrightarrow S\longrightarrow E\longrightarrow 0
    \end{equation}
    of algebraic groups over $k$ where $T$ is a torus and $E$ is an abelian variety.
    Let $A\subset\Aut_{k\text{-gp}}(S)$ be a finite subgroup.
    Then $|A|$ divides $T_{\dim(T)}T_{2\dim(E)}$ and hence $T_{2\dim(S)}$.
\end{lemma}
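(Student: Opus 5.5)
The plan is to split $A$ according to its actions on the torus part and on the abelian quotient, bound each piece by a Minkowski number via \ref{item : minkowski prop 1} (finite subgroups of $\GL_m(\QQ)$), and then combine the bounds.

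First, since $T$ is the maximal torus of $S$, it is characteristic: every homomorphism from a torus to $S$ lands in $T$, because its composite to $E$ is trivial (no nonconstant maps from a torus to an abelian variety). Hence every $\alpha\in A$ preserves $T$ and induces automorphisms of $T$ and of $E$. This gives a homomorphism
\[
\rho\colon A\longrightarrow \Aut_{k\text{-gp}}(T)\times\Aut_{k\text{-gp}}(E),
\qquad \Aut_{k\text{-gp}}(T)\cong \GL_{\dim T}(\ZZ).
\]
The image of $A$ in $\Aut(T)$ is a finite subgroup of $\GL_{\dim T}(\QQ)$, so its order divides $T_{\dim T}$. For the image in $\Aut(E)$, I would use that $\Aut(E)$ acts faithfully on the $\ell$-adic Tate module $V_\ell E$ for $\ell\neq \mathrm{char}\,k$, and that the characteristic polynomial of any endomorphism has integer coefficients independent of $\ell$. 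A finite group $G\subset\Aut(E)$ then generates a finite-dimensional $\QQ$-algebra $\QQ[G]\subset \mathrm{End}^0(E)$. I expect the cleanest route to be the standard Minkowski-type argument: for all primes $\ell\ge 3$ different from the characteristic, reduction modulo $\ell$ embeds $G$ into $\GL_{2\dim E}(\FF_\ell)$ with torsion-free kernel (Serre's lemma: an automorphism acting trivially on $E[\ell]$ with $\ell\ge3$ and of finite order is trivial). In fact $G$ embeds into $\Sp_{2\dim E}(\FF_\ell)$ after choosing a polarisation preserved by $G$, which we can obtain by averaging an ample class over $G$. Hence $|G|$ divides $\gcd_\ell|\Sp_{2\dim E}(\FF_\ell)|=T_{2\dim E}$ by Lemma~\ref{lem : gcd sp}, where excluding a single prime (the characteristic) does not change the gcd. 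This is the step I expect to be the main obstacle, namely verifying that excluding $\ell=p$ is harmless and that the symplectic refinement is legitimate. An alternative is to use $\GL$ instead of $\Sp$ and the rationality of traces to show that $G$ embeds in $\GL_{2\dim E}(\QQ)$ up to conjugation; this gives $T_{2\dim E}$ directly via \ref{item : minkowski prop 1}, since the rational character makes the representation realisable over $\QQ$ up to Schur index, which Minkowski's bound tolerates.

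Next comes the kernel of $\rho$. If $\alpha$ acts trivially on $T$ and $E$, then $\alpha-\id$ factors through a homomorphism $E\to T$, which must be $0$. Hence $\rho$ is injective, and $A$ is a subgroup of $\rho_T(A)\times\rho_E(A)$. However, a subgroup of a product does not in general have order dividing the product of the orders of the images. To fix this, I would consider $A_1=\ker(A\to\Aut T)$, which injects into $\Aut E$, and note that $|A|=|\rho_T(A)|\cdot|A_1|$, so $|A|$ divides $T_{\dim T}T_{2\dim E}$.

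Finally, to pass to $T_{2\dim S}$, I would check on $p$-adic valuations that $T_aT_{2b}\mid T_{2(a+b)}$. Writing $v_p(T_m)=\sum_i\lfloor m/((p-1)p^i)\rfloor$, this follows from superadditivity of the floor function together with $T_a\mid T_{2a}$, which holds because $v_p(T_m)$ is monotone in $m$.
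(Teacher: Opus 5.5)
Your skeleton is the paper's. You show $T$ is $A$-stable because a torus has no nonconstant map to an abelian variety, where the paper uses the uniqueness statement in Chevalley's theorem. Injectivity of $A\to A_T\times A_E$ comes from the same factorisation through a homomorphism $E\to T$, the torus part is bounded by Minkowski, and $T_aT_{2b}\mid T_{2(a+b)}$ is the same floor-function check. Your worry that a subgroup of a product need not have order dividing the product of the orders of the images is unfounded. Since $A\le A_T\times A_E$, Lagrange gives $|A|\mid |A_T||A_E|$, which is exactly what the paper uses; your kernel/image count is also correct, just unnecessary.

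The real difference is the abelian part. The paper quotes \cite[Lemma~8.2]{RosslerSchroer2026}, whereas you re-prove it by the mechanism of Section~\ref{sec : upper bound}: Serre's lemma on $E[\ell]$, a $G$-invariant polarisation, and Lemma~\ref{lem : gcd sp}. This makes the lemma self-contained. The symplectic refinement you were unsure about is essential, not cosmetic: with $\GL$ alone you only get $\gcd\{|\GL_2(\FF_\ell)| : \ell\ge 3 \text{ prime}\}=48$, not $T_2=24$.

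As written, that step has two holes.
\begin{enumerate}
\item The averaged polarisation $\phi$ is in general not principal. So $(x,y)\mapsto e_\ell(x,\phi(y))$ is a nondegenerate alternating form on $E[\ell]$ only when $\ell\nmid\deg\phi$, and these primes must be discarded as well as $\mathrm{char}\,k$.
\item Lemma~\ref{lem : gcd sp} concerns all odd primes and does not by itself say the gcd is unchanged after removing a finite set. You need its proof. For each prime $r$, every prime $\ell$ that is a primitive root modulo $r^2$ (or satisfies $\ell\equiv 3 \bmod 8$ when $r=2$) has $\nu_r(|\Sp_{2g}(\FF_\ell)|)=\nu_r(T_{2g})$ by lifting the exponent. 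Dirichlet's theorem supplies infinitely many such $\ell$, so the finitely many bad primes can always be avoided.
\end{enumerate}

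Finally, drop the $\GL$ fallback. In positive characteristic $G$ need not embed in $\GL_{2\dim E}(\QQ)$. For the supersingular elliptic curve in characteristic $2$, $\Aut(E)\cong\mathrm{SL}_2(\FF_3)$ has order $24$, while finite subgroups of $\GL_2(\QQ)$ have order at most $12$. The phrase ``Minkowski's bound tolerates the Schur index'' is an assertion, not an argument.
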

\begin{proof}
    By Chevalley's theorem (cf.\ \cite{Conrad2002Chevalley}), $T$ is the unique smooth connected closed affine normal subgroup of $S$ such that $S/T$ is an abelian variety.
    Therefore, $\Aut_{k\text{-gp}}(S)$ preserves $T$ and acts by automorphisms on $E$, making the short exact sequence in \eqref{eqn : semi-abelian SES} $\Aut_{k\text{-gp}}(S)$-equivariant.
    In particular, if $A_T$ and $A_E$ are the images of $A$ in $\Aut_{k\text{-gp}}(T)$ and $\Aut_{k\text{-gp}}(E)$, respectively, we obtain a map
    \[
    A\longrightarrow A_T\times A_E
    \]
    which we claim to be injective.
    Indeed, assume that $a\in A$ acts trivially on $T$ and $E$ and consider the map $f_a\colon S\rightarrow S$ sending $s$ to $a(s)s^{-1}$.
    It is a homomorphism since $S$ is commutative.
    Since $a$ acts trivially on $E$, $f_a$ takes values in $T$.
    Since $a$ acts trivially on $T$, $f_a$ is trivial on $T$.
    Thus, $f_a$ factors as
    \[
     S\longrightarrow E \stackrel{f_a'}{\longrightarrow} T \longrightarrow S.
    \]
    Since $E$ is proper and $T$ is affine, $f_a'$ is constant.
    Hence, $f_a$ is trivial and so $a=\id_S$, showing the desired injectivity $A\hookrightarrow A_T\times A_E$.
    In particular, we have
    \[
    |A| \mid |A_T||A_E|.
    \]

    By \cite[Lemma~8.2]{RosslerSchroer2026}, we have $|A_E|\mid T_{2\dim(E)}$.
    
    By property~\ref{item : minkowski prop 1} of the Minkowski numbers, we have $|A_T|\mid T_{\dim(T)}$ since $A_T$ is a finite subgroup of $\Aut_{k\text{-gp}}(T)\cong \GL_{\dim(T)}(\ZZ)$.
    Therefore, we get
    \[
    |A| \mid T_{\dim(T)}T_{2\dim(E)}
    \]
    as desired.
    The final statement that $T_{\dim(T)}T_{2\dim(E)}\mid T_{2\dim(S)}$ follows from the more general fact that $T_b\mid T_{b+1}$ and $T_b T_c\mid T_{b+c}$ for any $b,c\in \ZZ_{\ge 0}$, which is easily checked from Definition~\ref{def : Minkowski numbers}.
\end{proof}

\bibliographystyle{amsalpha}
\bibliography{refs}

\end{document}